\documentclass[amstex,12pt, amssymb]{article}

\usepackage{mathtext}
\usepackage[cp1251]{inputenc}
\usepackage[T2A]{fontenc}
\usepackage[dvips]{graphicx}
\usepackage{amsmath}
\usepackage{amssymb}
\usepackage{amsxtra}
\usepackage{latexsym}
\usepackage{ifthen}

\newcounter{lemma}[section]

\newcounter{corollary}[section]

\newcounter{remark}[section]

\newcounter{theorem}[section]

\newcounter{proposition}[section]

\newcounter{example}

\numberwithin{equation}{section}

\DeclareMathOperator*{\esssup}{ess\,sup}

\begin{document}

\markboth{M. MATELJEVI\'{C}, E.~SEVOST'YANOV}{\centerline{ON MONTEL
THEOREM ...}}

\def\cc{\setcounter{equation}{0}
\setcounter{figure}{0}\setcounter{table}{0}}

\overfullrule=0pt


\author{MIODRAG MATELJEVI\'{C}, EVGENY SEVOST'YANOV}

\title{
{\bf ON MONTEL THEOREM FOR MAPPINGS WITH INVERSE MODULI
INEQUALITIES}}

\date{\today}
\maketitle

\begin{abstract}
This paper is devoted to the study of mappings with finite
distortion, in particular, mappings satisfying the inverse Poletskii
inequality. We study the problem of equicontinuity of families of
such mappings in a given domain. We establish that a family of open
discrete mappings with the inverse Poletskii inequality, omitting at
least one point, is equicontinuous if the majorant responsible for
the distortion of the modulus of families of paths under the mapping
is integrable over almost all concentric spheres centered at the
given point. Since analytic functions with finite multiplicity
satisfy the inverse Poletskii inequality, this result generalizes
the well-known Montel theorem on the normality of families.
\end{abstract}

\bigskip
{\bf 2010 Mathematics Subject Classification: Primary 30C65;
Secondary 31A15, 31B25}

\section{Introduction}

This paper is devoted to the study of mappings with bounded and
finite distortion, see, e.g., \cite{AM}, \cite{BV},
\cite{Cr$_1$}--\cite{Cr$_3$}, \cite{MRV},
\cite{MRSY$_1$}--\cite{MRSY$_2$}, \cite{M$_1$}--\cite{M$_4$},
\cite{MSS}, \cite{PSS}, \cite{RV}, \cite{SalSt} and~\cite{Va}. The
well-known Montel theorem states that, under certain (minimal)
conditions, a family of analytic functions on a plane is normal
(equicontinuous). In particular, the following result holds.

\medskip
{\bf Theorem~A.}(\cite[$\S$ 32, Ch.~II]{Mont}). {\it\, A family
$\frak{F}_{a, b}(D)$ of all analytic functions $f:D\rightarrow {\Bbb
C}\setminus\{a, b\}$ of a domain $D\subset {\Bbb C}$ is normal for
any fixed $a, b\in {\Bbb C},$ $a\ne b.$}

\medskip
For quasiconformal mappings this result looks like this.

\medskip
{\bf Theorem~B.} (\cite[Theorem~19.2]{Va}). {\,\it A family ${\cal
F}_r,$ consisting of all $K$-quasiconformal mappings $f:D\rightarrow
\overline{{\Bbb R}^n}\setminus\{a_f, b_f\}$ is equicontinuous
whenever $h(a_f, b_f)\geqslant r>0$ (where $h$ -- is a chordal
metric and $r>0$ does not depend on $f$).}

\medskip
Finally, in the theory of quasiregular spatial mappings the
following result is known.

\medskip
{\bf Theorem~C.} (\cite[Corollary~3.14.IV]{Ri}). {\it\, Let $G$ be a
domain in $\overline{{\Bbb R}^n},$ $n\geqslant 3,$ and let
$K\geqslant 1.$ There exists a number $q_0,$ depending only $n$ and
$K,$ such that the following is true: any family ${\mathcal F}$ of
$K$-quasimeromorphic mappings $f:G\rightarrow \overline{{\Bbb R}^n}$
such that $f$ omits distinct points $a^f_1,\ldots, a^f_{q_0}$ is
equicontinuous whenever there exists $\gamma>0$ such that
$\sigma^f_0=\frac{1}{4}\min\limits_{j\ne k}\sigma(a^f_j,
a^f_k)\geqslant \gamma$ for any $f\in {\mathcal F},$ where
$\sigma(\cdot,\cdot)$ denotes the spherical distance between points
in $\overline{{\Bbb R}^n}.$}

\medskip
As we see, all the results of {\bf Theorems A-C} require, in one way
or another, some conditions on the omitting points in a given family
of mappings. Our immediate goal is to obtain analogs of these
theorems for a broader class of mappings. Let us turn to
definitions.

\medskip
A Borel function $\rho:{\Bbb R}^n\,\rightarrow [0,\infty] $ is
called {\it admissible} for the family $\Gamma$ of paths $\gamma$ in
${\Bbb R}^n,$ if the relation
\begin{equation}\label{eq1.4}
\int\limits_{\gamma}\rho (x)\, |dx|\geqslant 1
\end{equation}
holds for all (locally rectifiable) paths $ \gamma \in \Gamma.$ In
this case, we write: $\rho \in {\rm adm} \,\Gamma .$ Let $p\geqslant
1,$ then {\it $p$-modulus} of $\Gamma $ is defined by the equality
\begin{equation}\label{eq1.3gl0}
M_p(\Gamma)=\inf\limits_{\rho \in \,{\rm adm}\,\Gamma}
\int\limits_{{\Bbb R}^n} \rho^p (x)\,dm(x)\,.
\end{equation}
We set $M(\Gamma):=M_n(\Gamma).$ Let $y_0\in {\Bbb R}^n,$
$0<r_1<r_2<\infty$ and
\begin{equation}\label{eq1**}
A(y_0, r_1,r_2)=\left\{ y\,\in\,{\Bbb R}^n:
r_1<|y-y_0|<r_2\right\}\,.\end{equation}
Given sets $E,$ $F\subset\overline{{\Bbb R}^n}$ and a domain
$D\subset {\Bbb R}^n$ we denote by $\Gamma(E,F,D)$ the family of all
paths $\gamma:[a,b]\rightarrow \overline{{\Bbb R}^n}$ such that
$\gamma(a)\in E,\gamma(b)\in\,F$ and $\gamma(t)\in D$ for $t \in (a,
b).$ If $f:D\rightarrow {\Bbb R}^n$ is a given mapping, $y_0\in
\overline{f(D)}$ and $0<r_1<r_2<d_0=\sup\limits_{y\in f(D)}|y-y_0|,$
then by $\Gamma_f(y_0, r_1, r_2)$ we denote the family of all paths
$\gamma$ in $D$ such that $f(\gamma)\in \Gamma(S(y_0, r_1), S(y_0,
r_2), A(y_0,r_1,r_2)).$ Let $Q:{\Bbb R}^n\rightarrow [0, \infty]$ is
a Lebesgue measurable function. We say that {\it $f$ satisfies
inverse Poletsky inequality relative to $p$-modulus} at the point
$y_0\in \overline{f(D)},$ $p\geqslant 1,$ if there is
$r_0=r_0(y_0)>0,$ $r_0<d_0=\sup\limits_{y\in f(D)}|y-y_0|,$ such
that the ratio
\begin{equation}\label{eq2*A}
M_p(\Gamma_f(y_0, r_1, r_2))\leqslant \int\limits_{f(D)\cap
A(y_0,r_1,r_2)} Q(y)\cdot \eta^p (|y-y_0|)\, dm(y)
\end{equation}
holds for any $0<r_1<r_2<r_0$ and any Lebesgue measurable function
$\eta: (r_1,r_2)\rightarrow [0,\infty ]$ such that
\begin{equation}\label{eqA2}
\int\limits_{r_1}^{r_2}\eta(r)\, dr\geqslant 1\,.
\end{equation}

\begin{remark}\label{rem1}
Here and below
\begin{equation}\label{eq23}
N(y, f, D)\,=\,{\rm card}\,\left\{x\in D: f(x)=y\right\}\,.
\end{equation}
Note that all quasiregular mappings $f:D\rightarrow {\Bbb R}^n$
satisfy the condition
\begin{equation}\label{eq22}
M(\Gamma_f(y_0, r_1, r_2))\leqslant \int\limits_{f(D)\cap
A(y_0,r_1,r_2)} K_O\cdot N(y, f, D)\cdot \eta^n (|y-y_0|)\, dm(y)
\end{equation}
at each point $y_0\in {\Bbb R}^n\setminus\{\infty\}$ with
$K_O=K_O(f)=\esssup\limits_{x\in D}\frac{\Vert
f^{\,\prime}(x)\Vert^n}{|J(x, f)|}\geqslant 1$ and an arbitrary
Lebesgue-dimensional function $\eta: (r_1,r_2)\rightarrow
[0,\infty],$ which satisfies condition~(\ref{eqA2}). Indeed,
quasiregular mappings satisfy the condition
\begin{equation}\label{eq24}
M(\Gamma_f(y_0, r_1, r_2))\leqslant \int\limits_{f(D)\cap
A(y_0,r_1,r_2)} K_O\cdot N(y, f, D)\cdot (\rho^{\,\prime})^n(y)\,
dm(y)
\end{equation}
for an arbitrary function $\rho^{\,\prime}\in{\rm adm}\,
f(\Gamma_f(y_0, r_1, r_2)),$ see \cite[Remark~2.5.II]{Ri}. Put
$\rho^{\,\prime}(y):=\eta(|y-y_0|)$ for $y\in A(y_0,r_1,r_2)\cap
f(D),$ and $\rho^{\,\prime}(y)=0$ otherwise. By Luzin theorem, we
may assume that the function $\rho^{\,\prime}$ is Borel measurable
(see, e.g., \cite[Section~2.3.6]{Fe}). Due
to~\cite[Theorem~5.7]{Va},
$$\int\limits_{\gamma_*}\rho^{\,\prime}(y)\,|dy|\geqslant
\int\limits_{r_1}^{r_2}\eta(r)\,dr\geqslant 1$$
for each (locally rectifiable) path $\gamma_*$ in $\Gamma(S(y_0,
r_1), S(y_0, r_2), A(y_0, r_1, r_2)).$ By substituting the function
$\rho^{\,\prime}$ mentioned above into~(\ref{eq24}), we obtain the
desired ratio~(\ref{eq22}).
\end{remark}

\medskip
Recall that a mapping $f:D\rightarrow {\Bbb R}^n$ is called {\it
discrete} if the pre-image $\{f^{-1}\left(y\right)\}$ of each point
$y\,\in\,{\Bbb R}^n$ consists of isolated points, and {\it is open}
if the image of any open set $U\subset D$ is an open set in ${\Bbb
R}^n.$ Later, in the extended space $\overline{{{\Bbb R}}^n}={{\Bbb
R}}^n\cup\{\infty\}$ we use the {\it spherical (chordal) metric}
$h(x,y)=|\pi(x)-\pi(y)|,$ where $\pi$ is a stereographic projection
of $\overline{{{\Bbb R}}^n}$ onto the sphere
$S^n(\frac{1}{2}e_{n+1},\frac{1}{2})$ in ${{\Bbb R}}^{n+1},$ namely,
\begin{equation}\label{eq3C}
h(x,\infty)=\frac{1}{\sqrt{1+{|x|}^2}}\,,\qquad
h(x,y)=\frac{|x-y|}{\sqrt{1+{|x|}^2} \sqrt{1+{|y|}^2}}\,, \quad x\ne
\infty\ne y
\end{equation}
(see \cite[Definition~12.1]{Va}). Further, the closure
$\overline{A}$ and the boundary $\partial A$ of the set $A\subset
\overline{{\Bbb R}^n}$ we understand relative to the chordal metric
$h$ in $\overline{{\Bbb R}^n}.$ Earlier, the second author obtained
some results regarding the equicontinuity of families of mappings
in~(\ref{eq2*A}), see, e.g. \cite{SevSkv$_1$}--\cite{SevSkv$_3$} and
\cite{SSD}. Now, we give one of the most general versions on this
topic.

\medskip
For domains $D, D^{\,\prime}\subset {\Bbb R}^n,$ $n\geqslant 2,$ and
a Lebesgue measurable function $Q:{\Bbb R}^n\rightarrow [0, \infty]$
equal to zero outside the domain $D^{\,\prime},$ we define by
$\frak{R}_Q(D, D^{\,\prime})$ the family of all open discrete
mappings $f:D\rightarrow D^{\,\prime}$ such that
relation~(\ref{eq2*A}) holds for each point $y_0\in D^{\,\prime}.$
Note that the definition of class $\frak{R}_Q(D, D^{\,\prime})$ does
not require that the domain $D$ be mapped onto the domain
$D^{\,\prime}$ surjectively under the mapping $f\in \frak{R}_Q(D,
D^{\,\prime}).$ In what follows, $\mathcal{H}^{n-1}$ denotes
$(n-1)$-dimensional Hausdorff measure. The following result holds.

\medskip
{\bf Theorem~D.} (\cite[Theorem~1.1]{SevSkv$_3$};
\cite[Theorem~1.5]{SevSkv$_2$}). {\it Let $D$ and $D^{\,\prime}$ be
domains in ${\Bbb R}^n,$ $n\geqslant 2,$ and let $D^{\,\prime}$ be a
bounded domain. Suppose that, for each point $y_0\in D^{\,\prime}$
and for every $0<r_1<r_2<r_0:=\sup\limits_{y\in
D^{\,\prime}}|y-y_0|$ there is a set $E\subset[r_1, r_2]$ of a
positive linear Lebesgue measure such that the function $Q$ is
integrable with respect to $\mathcal{H}^{n-1}$ over the spheres
$S(y_0, r)$ for every $r\in E.$ Then the family of mappings
$\frak{R}_Q(D, D^{\,\prime})$ is equicontinuous at each point
$x_0\in D.$ In particular, the latter holds, if the assumption on
$Q$ mentioned above replace by the simpler one: $Q\in L_{\rm
loc}^1(D^{\,\prime}).$}

\medskip
Note that, {\bf Theorem~D} generalized {\bf Theorems~A-C} for the
case, when the mapped domain $D^{\,\prime}$ is bounded and
multiplicity function $N(y, f, D)$ is uniformly bounded. Indeed, by
Remark~\ref{rem1} any quasiregular mapping mentioned above
satisfies~(\ref{eq2*A})--(\ref{eqA2}). At the same time, this
theorem is obviously not a complete analogue of any of {\bf
Theorems~A-C}, since its conditions imply restrictions on the mapped
domain and $N(y, f, D).$ On the other hand,
in~\cite{SevSkv$_1$}--\cite{SevSkv$_3$} and \cite{SSD}, we also
established similar results, which do not assume any restrictions on
the mapped domain, but for the case of an integrable~$Q$. These
results also do not give a complete picture, since the problem of
the existence of such mappings may arise whenever the mapped domain
is unbounded. The result established below brings further clarity to
the study of this problem. Here, we require that the family of
mappings omit at least one point, and that the majorant $Q$ be only
integrable by the spheres in the neighborhood of some (one) point.

\medskip
Given a domain $D\subset {\Bbb R}^n,$ $n\geqslant 2,$ a Lebesgue
measurable function $Q:{\Bbb R}^n\rightarrow [0, \infty]$ and a
point $a\in {\Bbb R}^n$ we denote by $\frak{F}_{Q, a}(D)$ a family
of all open discrete mappings $f:D\rightarrow {\Bbb
R}^n\setminus\{a\}$ satisfying the
relations~(\ref{eq2*A})--(\ref{eqA2}) at all finite points $y_0\in
{\Bbb R}^n.$ The following statement holds.

\medskip
\begin{theorem}\label{th2}
{\it\, Assume that, the following condition holds: for each point
$y_0\in {\Bbb R}^n$ there exists $r_0=r_0(y_0)>0$ such that, for
every $0<r_1<r_2<r_0$ there is a set $E\subset[r_1, r_2]$ of a
positive linear Lebesgue measure such that the function $Q$ is
integrable with respect to $\mathcal{H}^{n-1}$ over the spheres
$S(y_0, r)$ for every $r\in E.$ Then the family of mappings
$\frak{F}_{Q, a}(D)$ is equicontinuous at each point $x_0\in D.$ }
\end{theorem}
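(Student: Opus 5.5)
We argue by contradiction, using the standard "two continua" scheme together with the inverse Poletsky inequality at the omitted point $a$.

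\emph{Setup.} Suppose the family $\frak{F}_{Q, a}(D)$ is not equicontinuous at $x_0\in D$. Then there exist $\varepsilon_0>0$, points $x_k\to x_0$ and mappings $f_k\in\frak{F}_{Q, a}(D)$ such that
$$h(f_k(x_k),f_k(x_0))\geqslant\varepsilon_0 .$$
Fix a ball $\overline{B(x_0,\delta)}\subset D$ and let $C_k$ be the segment $[x_0,x_k]$. This is a continuum whose Euclidean diameter tends to zero, while its image $f_k(C_k)$ has chordal diameter at least $\varepsilon_0$.

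\emph{Step 1: paths in the image.} Take $y_0:=a$ and $r_0=r_0(a)$. By hypothesis, for every $0<r_1<r_2<r_0$ there is a set $E\subset[r_1,r_2]$ of positive measure on which $Q$ is integrable over the spheres $S(a,r)$. The standard argument (as in the proof of Theorem~D) then gives the following: choosing $\eta(r)=1/(|E|\,)$-type weights, or more precisely $\eta(r)=\frac{1}{I\,r^{(n-1)/(n-1)}\,q(r)^{1/(n-1)}}$ with $q(r)=\int_{S(a,r)}Q\,d\mathcal H^{n-1}$ restricted to $E$, one obtains
$$M(\Gamma_f(a,r_1,r_2))\leqslant \Big(\int_E \frac{dr}{r^{\frac{n-1}{n-1}}q(r)^{\frac1{n-1}}}\Big)^{1-n}=:C(r_1,r_2)<\infty$$
uniformly in $f$. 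Moreover, choosing $r_1\to0$ through many disjoint annuli and using the superadditivity trick, one can make $C$ as small as desired.

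\emph{Step 2: the key step, forcing images to hit small spheres around $a$.} We need that the paths of $f_k(C_k)$ joining $f_k(C_k)$ to a fixed continuum produce a family in $\Gamma_{f_k}(a,r_1,r_2)$. Since $f_k$ omits $a$, we use the inverse-modulus inequality on the domain side. Pick a fixed nondegenerate continuum $K\subset B(x_0,\delta)$, say a small sphere $S(x_0,\delta/2)$. By Väisälä's lemma (or a direct cap-type estimate), the family $\Gamma(C_k,K,D)$ has $n$-modulus tending to $\infty$ as $k\to\infty$, since the diameter of $C_k$ tends to zero while $C_k$ contains $x_0$; more precisely, shrink $K$ and use the lower bound $M(\Gamma(C_k,K,B))\geqslant c_n\log\frac{\delta}{|x_k-x_0|}\to\infty$ on condensers.

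On the image side, since $h(f_k(x_k),f_k(x_0))\geqslant\varepsilon_0$, at least one of $f_k(C_k)$, $f_k(K)$ has chordal diameter bounded below. After passing to subsequences (and normalizing by a Möbius map that sends $a$ to $\infty$ if convenient), this produces a fixed annulus $A(a,r_1,r_2)$ separating $f_k(C_k)$ from $f_k(K)$ for infinitely many $k$. This separation is the main obstacle: it requires ruling out that both images drift to $\infty$ or collapse near $a$. I would handle it by a case analysis on the limits of $f_k(x_0)$ and $f_k(x_k)$ in $\overline{{\Bbb R}^n}$, applying Step 1 either at $y_0=a$ or at a finite limit point $y_0$ of $f_k(x_0)$.

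\emph{Step 3: conclusion.} Each path of $\Gamma(C_k,K,D)$ then has a subpath in $\Gamma_{f_k}(y_0,r_1,r_2)$, so by minorization
$$M(\Gamma(C_k,K,D))\leqslant C(r_1,r_2)<\infty ,$$
which contradicts the fact from Step 2 that $M(\Gamma(C_k,K,D))\to\infty$. Hence $\frak{F}_{Q, a}(D)$ is equicontinuous at every $x_0\in D$.
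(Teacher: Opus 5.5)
There is a genuine gap, and it sits exactly at the step you call the key one. In Step~2 you claim that $M(\Gamma(C_k,K,D))\to\infty$ for the segment $C_k=[x_0,x_k]$ and the fixed sphere $K=S(x_0,\delta/2)$. This is false, because a continuum shrinking to a point has vanishing $n$-capacity. Every path joining $C_k$ to $K$ crosses the ring $A(x_0,|x_k-x_0|,\delta/2)$, so $M(\Gamma(C_k,K,D))\leqslant\omega_{n-1}\bigl(\log\frac{\delta}{2|x_k-x_0|}\bigr)^{1-n}\to 0$. The logarithm enters with exponent $1-n$, not $+1$.

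V\"{a}is\"{a}l\"{a}'s lemma (Lemma~\ref{lem2C}) requires \emph{both} continua to meet $\partial U$ and $\partial V$, i.e.\ to have diameters bounded below while approaching each other. A tiny $C_k$ never satisfies this, so you have no lower bound to set against Step~3, and the contradiction never materializes.

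A symptom of the problem is that the omitted point $a$ does no real work in your argument, yet the statement is false without it ($f_m(x)=mx$ on $\mathbb{R}^n$). The separation of $f_k(C_k)$ from $f_k(K)$ by a fixed annulus around $a$ is also unsupported, since nothing controls where $f_k(K)$ goes. Finally, your claim in Step~1 that $C(r_1,r_2)$ can be made arbitrarily small would require divergence of $\int_0 dr/(r\,q^{1/(n-1)}(r))$, which the hypothesis does not give; you do not actually use this, though.

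The missing idea is path lifting, which is how the paper produces large continua near $x_0$. First, one applies Theorem~\ref{th3} to subfamilies that are bounded near $x_0$; such subfamilies omit a second point $b$. This yields points $x'_m\to x_0$ with $f_m(x'_m)\to\infty$. Alongside these, after relabeling, there are points $x_m\to x_0$ with $f_m(x_m)\to\overline{x_1}\ne\infty$, because $h(f_m(x_m),f_m(x_0))\geqslant\varepsilon_0$ prevents both images from tending to $\infty$.

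Next, take two image paths:
\begin{itemize}
\item a path $\alpha_m$ from $f_m(x_m)$ through $\overline{x_1}$ to the omitted point $a$, lying in a fixed compact set $A_1$;
\item the ray $r_m(t)$, $t\geqslant 1$, from $f_m(x'_m)$ to $\infty$, which stays at distance at least $\varepsilon^{\,*}_1$ from $A_1$.
\end{itemize}
Since neither $a$ nor $\infty$ is a value of $f_m$, the maximal $f_m$-liftings of these paths (Proposition~\ref{pr3}) cannot terminate inside $D$ and must run out to $\partial D$. This gives continua $P_m\ni x_m$ and $Q_m\ni x'_m$ with diameters bounded below and $d(P_m,Q_m)\to 0$. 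Then $M(\Gamma(P_m,Q_m,D))\to\infty$ by Lemma~\ref{lem2C}.

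The upper bound does not come from a single annulus around $a$. Instead, cover $A_1$ by finitely many balls $B(y_i,r_i/4)$ with $r_i\leqslant\min\{\varepsilon^{\,*}_1,r_0(y_i)\}$. Each path of $\Gamma(P_m,Q_m,D)$ then has a subpath in some $\Gamma_{f_m}(y_i,r_i/4,r_i/2)$. The sphere-integrability of $Q$ at these finitely many centers $y_i$ gives a bound independent of $m$, which yields the contradiction. This is why the hypothesis on $Q$ is imposed at every $y_0\in\mathbb{R}^n$, not only at $a$.
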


\medskip
\begin{corollary}\label{cor1}
{\it\, The statement of Theorem~\ref{th2} is true if the condition
on the function~$Q$ mentioned above is replaced by a simpler one:
$Q\in L_{\rm loc}^{1}({\Bbb R}^n).$}
\end{corollary}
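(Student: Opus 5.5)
The statement to prove is Corollary~\ref{cor1}. The plan is to deduce it directly from Theorem~\ref{th2}: I will show that $Q\in L^1_{\rm loc}({\Bbb R}^n)$ implies the hypothesis of Theorem~\ref{th2} on spherical integrability. The only tool needed is Fubini's theorem in spherical coordinates. Then Theorem~\ref{th2} applies verbatim to the family $\frak{F}_{Q, a}(D)$ and gives equicontinuity at each $x_0\in D$.

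Fix $y_0\in{\Bbb R}^n$ and take any $r_0=r_0(y_0)>0$, for instance $r_0=1$. Let $0<r_1<r_2<r_0$. The closed annulus $\overline{A(y_0,r_1,r_2)}$ is compact, so local integrability gives $\int_{A(y_0,r_1,r_2)}Q\,dm<\infty$. By Fubini's theorem in polar coordinates centred at $y_0$ (see, e.g., \cite[Section~3.2]{Fe}),
$$\int\limits_{A(y_0,r_1,r_2)}Q(y)\,dm(y)=\int\limits_{r_1}^{r_2}\Bigl(\int\limits_{S(y_0,r)}Q(y)\,d\mathcal{H}^{n-1}(y)\Bigr)dr<\infty\,.$$
The inner function $r\mapsto\int_{S(y_0,r)}Q\,d\mathcal{H}^{n-1}$ is measurable, and it is finite for almost every $r\in[r_1,r_2]$, since a nonnegative function with finite integral is finite a.e. Let $E$ be the set of such $r$. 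Its linear measure is $r_2-r_1>0$, and $Q$ is integrable with respect to $\mathcal{H}^{n-1}$ over $S(y_0,r)$ for every $r\in E$. Here $Q\geqslant 0$, so integrability is the same as finiteness of this integral.

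This is exactly the condition required in Theorem~\ref{th2}, for every $y_0\in{\Bbb R}^n$ and every $0<r_1<r_2<r_0$. Hence the family $\frak{F}_{Q, a}(D)$ is equicontinuous at each point $x_0\in D$.

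No step is a real obstacle. The only point needing care is that $Q$ is merely Lebesgue measurable, not Borel, when the spherical slices are used. This is handled by the Fubini--Tonelli theorem for the completed product measure in polar coordinates: it gives measurability of almost every slice, and that is enough because $E$ only needs to have positive measure. If desired, one can instead replace $Q$ by a Borel function $\tilde Q\geqslant Q$ with equal integral over the annulus. For the resulting $E$ the slice integrals of $\tilde Q$ are finite, and since $\tilde Q\geqslant Q$ the slice integrals of $Q$ are finite as well, wherever the restriction of $Q$ to the sphere is measurable.
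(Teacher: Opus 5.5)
Your proposal is correct and follows essentially the paper's route: the paper proves Corollary~\ref{cor1} exactly as Corollary~\ref{cor3}, using Fubini's theorem in spherical coordinates to show that $r\mapsto\int_{S(y_0,r)}Q\,d\mathcal{H}^{n-1}$ is finite for almost every $r\in[r_1,r_2]$, which verifies the hypothesis of Theorem~\ref{th2}. Your version is slightly more careful than the paper's, since you take $E$ to be a set of full measure (the paper loosely says ``any subset $E_1$'') and you address measurability of the slices.
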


\medskip
Theorem~\ref{th2} applies to the case where the family of mappings
omits at least one point. However, an analog of this statement may
be given when the presence of excluded points is not necessary;
furthermore, the function $Q$ may be assumed to be integrable in the
neighborhood of just one point.

\medskip
Let $\frak{F}$ be a family of mappings $f:D\rightarrow {\Bbb R}^n.$
We set
\begin{equation*} C(x_0, \frak{F}):=\{y\in \overline{{\Bbb
R}^n}:\exists\,x_k\in D, f_k\in \frak{F}: x_k\rightarrow x_0,
f_k(x_k) \rightarrow y, k\rightarrow\infty\}\,.
\end{equation*}
Given a domain $D\subset {\Bbb R}^n,$ $n\geqslant 2,$ and a Lebesgue
measurable function $Q:{\Bbb R}^n\rightarrow [0, \infty]$ we denote
by $\frak{R}_{Q}(D)$ some family of all open discrete mappings
$f:D\rightarrow {\Bbb R}^n$ satisfying the
relations~(\ref{eq2*A})--(\ref{eqA2}) at all finite points $y_0\in
{\Bbb R}^n.$ The following statement holds.

\medskip
\begin{theorem}\label{th1}
{\it\, Let $x_0\in D.$ Assume that, there exists $y_0\in C(x_0,
\frak{R}_{Q}(D))$ such that:

\medskip
1) there exists $r_0=r_0(y_0)>0$ such that, for every
$0<r_1<r_2<r_0$ there is a set $E\subset[r_1, r_2]$ of a positive
linear Lebesgue measure such that the function $Q$ is integrable
with respect to $\mathcal{H}^{n-1}$ over the spheres $S(y_0, r)$ for
every $r\in E;$

\medskip
2) there exists a sequence $y_m\in B(y_0, r_0),$ $y_m\ne y_0,$
$y_m\rightarrow y_0$ as $m\rightarrow\infty$ such that, for every
$m\in {\Bbb N}$ there exists $C_m>0$ such that $d(x_0, A_m)\geqslant
C_m,$ where
$$A_m:=\{x\in D: \exists\, f\in \frak{R}_{Q}(D): f(x)=y_m\}\,.$$
Then the family of mappings $\frak{R}_{Q}(D)$ is equicontinuous at
$x_0\in D.$ }
\end{theorem}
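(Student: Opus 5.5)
Argue by contradiction. Suppose $\frak{R}_Q(D)$ is not equicontinuous at $x_0$. Then there are $\varepsilon_0>0$, points $x_k\to x_0$ and mappings $f_k\in\frak{R}_Q(D)$ with
$$h(f_k(x_k),f_k(x_0))\geqslant\varepsilon_0 .$$

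\textbf{Step 1: two continua in the image.} Fix $m$ so large that $|y_m-y_0|<r_0/4$, and put $\delta:=|y_m-y_0|$. Using $y_0\in C(x_0,\frak{R}_Q(D))$ and the failure of equicontinuity, we want, along a subsequence, points near $x_0$ whose images lie near $y_0$, and also points near $x_0$ whose images stay a definite chordal distance away. Take the segment $[x_k,x_0]$, or a small ball $B(x_0,\rho_k)$ with $\rho_k\to0$. Its image $f_k([x_k,x_0])$ is a continuum whose chordal diameter is at least $\varepsilon_0$. Since $y_m$ is omitted by every $f\in\frak{R}_Q(D)$ on $B(x_0,C_m)$ (hypothesis 2), these small continua never pass through $y_m$.

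\textbf{Step 2: modulus estimate.} Apply the inverse Poletsky inequality~(\ref{eq2*A}) at the centre $y_m$, with radii $0<r_1<r_2<r_0/2$ chosen so that $B(y_m,r_2)\subset B(y_0,r_0)$. Hypothesis 1), after recentring at $y_m$ (or by taking the centre $y_0$ and using $y_m$ only as the omitted point), lets us choose $\eta(r)=\chi_E(r)/|E|$-type functions, or more precisely
$$\eta(r)=\frac{1}{I\,r^{(n-1)/(n-1)}q(r)^{1/(n-1)}}, \qquad q(r)=\int_{S(y_0,r)}Q\,d\mathcal{H}^{n-1},$$
on $E$. With this choice, Fubini gives
$$M(\Gamma_{f}(y_0,r_1,r_2))\leqslant \Bigl(\int_{r_1}^{r_2}\frac{dr}{r^{(n-1)/(n-1)}q(r)^{1/(n-1)}}\Bigr)^{1-n},$$
which is finite and independent of $f$. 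This is the standard step, as in Theorem~D.

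\textbf{Step 3: lower bound.} Take the connected component $C_k$ of $f_k^{-1}(B(y_m,r_1))$ (or of $\overline{B}$) that lies closest to $x_0$. Since $y_m\notin f_k(B(x_0,C_m))$, this component stays at distance $\geqslant C_m$ from $x_0$, while $f_k$ maps it onto a neighbourhood of $y_m$; openness and discreteness give path lifting, so $C_k$ meets $f_k^{-1}(S(y_m,r_1))$. On the other hand, the continuum $K_k=[x_k,x_0]$ shrinks to $x_0$, and its image either hits $B(y_m,r_1)$ or exits $B(y_m,r_2)$, because $h$-diameter $\geqslant\varepsilon_0$ and one endpoint approaches $y_0$ by the choice through $C(x_0,\cdot)$.

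In the exiting case, consider the family of paths joining $K_k$ to the fixed continuum $f_k^{-1}$-lift near $y_m$. The hard part is to produce a path family in $D$ whose modulus tends to $\infty$: joining a degenerating continuum $K_k\to\{x_0\}$ to a set of diameter bounded below (the lifted component, of diameter bounded below via the distance $C_m$) inside a fixed ball $B(x_0,C_m)$. By Väisälä's lower estimate for the modulus of curves joining continua in a ball, $M(\Gamma(K_k,C_k,D))\to\infty$ is not automatic, since $K_k$ degenerates. The remedy is to replace $K_k$ by the lift, via Rickman's maximal lifting, of radial segments from $f_k(x_k)$ outward. This yields a continuum from near $x_0$ to $\partial B(x_0,C_m)$, so that both continua have diameters bounded below and pass near $x_0$. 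Then the Loewner-type estimate gives $M\geqslant c(n,C_m)>0$; this alone is not enough for a contradiction.

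\textbf{Obtaining the contradiction.} I would instead get the contradiction by letting $r_1\to0$. The upper bound of Step 2 at centre $y_m$ tends to $0$ as $r_1\to0$, because $\int_{r_1}^{r_2}\eta\to\infty$ by divergence near a point where integrability over a.e. sphere holds. This is true for $n\geqslant2$ when $q(r)\lesssim$ const on a set of positive density, which is the delicate point and the main obstacle. Meanwhile the lower bound stays $\geqslant c>0$, since the component containing a preimage of $y_m$ near the lifted path remains at distance $\geqslant C_m$ from $x_0$ but is nondegenerate. This incompatibility yields the contradiction.
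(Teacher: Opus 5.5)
Your proposal never reaches a contradiction. The step you abandon in Step~3 is exactly the one that drives the paper's proof. You say that two continua of diameter bounded below give only $M\geqslant c(n,C_m)>0$. But suppose the continua contain $x_k$ and $x_0$ respectively, so their distance is at most $|x_k-x_0|\to0$, and their diameters are bounded below. Then V\"ais\"al\"a's weak flatness lemma (Lemma~\ref{lem2C}) gives $M(\Gamma(P_k,Q_k,D))\to\infty$.

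The paper builds such continua by using hypothesis~2) not as an omitted value for a ring centred at $y_m$, but as the \emph{endpoint} of an image path. With $f_k(x_k)\to y_0$ and $f_k(x_0)\to\overline{x_2}\ne y_0$:
\begin{itemize}
\item Join $f_k(x_k)$ to $a=y_{m_0}$ inside $B(y_0,R_1)$. By Proposition~\ref{pr3}, the maximal lift $P_k$ from $x_k$ either tends to $\partial D$ or ends at a point of $f_k^{-1}(a)$. Such a point lies at distance $\geqslant C_{m_0}$ from $x_0$, so $P_k$ cannot degenerate.
\item $Q_k$ is the lift from $x_0$ of the ray from $f_k(x_0)$ to $\infty$. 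This ray stays outside $B(y_0,R_2)$, and its lift must tend to $\partial D$. Your idea of lifting rays is the right tool here, but only for this continuum.
\end{itemize}
Then $\Gamma(P_k,Q_k,D)>\Gamma_{f_k}(y_0,R_1,R_2)$. The paper writes the centre as $a$, but the ring about $\overline{x_1}=y_0$ is what separates the images and where 1) applies. Your Step~2 estimate on this \emph{fixed} ring, with $\widetilde{Q}=\max\{Q,1\}$ so that $0<I<\infty$, bounds $M(\Gamma(P_k,Q_k,D))$ uniformly in $k$. No limit $r_1\to0$ is needed.

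The ending you propose instead fails on several counts:
\begin{itemize}
\item Hypothesis~1) controls $Q$ only on spheres about $y_0$, not about $y_m$. Even there it only gives $q(r)<\infty$ on a set of positive measure. It does not force $\int_{r_1}^{r_2}dr/(r\,q^{1/(n-1)}(r))\to\infty$ as $r_1\to0$; that is an additional divergence condition.
\item The lower bound is not uniform in $r_1$, because the continuum whose image must lie in $\overline{B(y_m,r_1)}$ need not have diameter bounded below as $r_1\to0$.
\item The claim that a component of $f_k^{-1}(B(y_m,r_1))$ stays $C_m$ away from $x_0$ is unfounded. Hypothesis~2) keeps away only preimages of the single point $y_m$, and such a component need not contain one.
\end{itemize}

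Finally, ``one endpoint approaches $y_0$ by the choice through $C(x_0,\cdot)$'' is not justified. The membership $y_0\in C(x_0,\frak{R}_Q(D))$ may be realized by maps other than the $f_k$ violating equicontinuity. The paper makes the same reduction (``we may assume $\overline{x_1}=y_0$'') without argument, and it cannot simply be dropped. Take $D=B(0,1)$, $x_0=y_0=0$, $Q=1$ on $B(0,1)$ and $Q=\infty$ elsewhere. Consider the family consisting of $x/2$ together with $c+|x|^{1/k-1}x$, $k\in{\Bbb N}$, $|c|=10$. For the second maps, (\ref{eq2*A}) holds trivially because its right-hand side is $+\infty$. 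This family satisfies 1) and 2) (with $y_m\to0$ in $B(0,1/2)$, $A_m=\{2y_m\}$), yet it is not equicontinuous at $0$.
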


\medskip
\begin{corollary}\label{cor2}
{\it\, The statement of Theorem~\ref{th1} is true if the
condition~1) mentioned in this theorem is replaced by a simpler one:
$Q\in L_{\rm loc}^{1}({\Bbb R}^n).$}
\end{corollary}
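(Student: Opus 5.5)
The plan is to reduce Corollary~\ref{cor2} directly to Theorem~\ref{th1}. Condition~2) of Theorem~\ref{th1} is unchanged, so it suffices to show that $Q\in L_{\rm loc}^{1}({\Bbb R}^n)$ implies condition~1) at the given point $y_0\in C(x_0, \frak{R}_{Q}(D))$. Two details need care. First, since $Q$ is defined on all of ${\Bbb R}^n$ and is locally integrable everywhere, any fixed $r_0>0$ will do; I would take $r_0=1$ or any value already used in condition~2). Second, condition~2) refers to $B(y_0,r_0)$, so I would keep whatever $r_0$ condition~2) requires; condition~1) will hold for every $r_0$ anyway.

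The key step is Fubini's theorem in spherical coordinates centred at $y_0$. Since $\overline{B(y_0,r_0)}$ is compact and $Q\in L^1_{\rm loc}$, we have $\int_{B(y_0,r_0)}Q\,dm<\infty$. For a nonnegative Lebesgue measurable function, the polar-coordinate formula gives
$$\int\limits_{B(y_0,r_0)}Q(y)\,dm(y)=\int\limits_0^{r_0}\int\limits_{S(y_0,r)}Q(y)\,d\mathcal{H}^{n-1}(y)\,dr\,.$$
Here the inner integral is defined and measurable in $r$ for almost every $r$. This is the only mildly delicate point: $Q$ is only Lebesgue, not Borel, measurable. I would handle it either by citing the standard polar-coordinate version of Fubini--Tonelli for Lebesgue measurable functions, or by replacing $Q$ with a Borel function $\widetilde Q\geqslant Q$ that equals $Q$ almost everywhere, and noting that the relevant sphere integrals differ only on a null set of radii.

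Since the left-hand side is finite, the function $r\mapsto\int_{S(y_0,r)}Q\,d\mathcal{H}^{n-1}$ is finite for almost every $r\in(0,r_0)$. Now fix $0<r_1<r_2<r_0$ and let $E$ be the set of $r\in[r_1,r_2]$ for which this integral is finite. Then $E$ has full measure $r_2-r_1>0$ in $[r_1,r_2]$, so it has positive linear measure, and $Q$ is integrable with respect to $\mathcal{H}^{n-1}$ over $S(y_0,r)$ for every $r\in E$. This is exactly condition~1), and Theorem~\ref{th1} then yields equicontinuity of $\frak{R}_Q(D)$ at $x_0$.

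I expect no real obstacle beyond the measurability remark above. The same argument, applied at every $y_0\in{\Bbb R}^n$, also gives Corollary~\ref{cor1} from Theorem~\ref{th2}.
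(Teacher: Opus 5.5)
Your proposal is correct and follows the same route as the paper, which proves Corollary~\ref{cor2} exactly as Corollary~\ref{cor3}: Fubini's theorem in spherical coordinates gives $\int_{S(y_0,r)}Q\,d\mathcal{H}^{n-1}<\infty$ for almost every $r$, so condition~1) holds and Theorem~\ref{th1} applies. Your version is somewhat more careful than the paper's, which speaks of ``any subset'' of $[r_1,r_2]$ rather than a full-measure set of radii. Your remarks on keeping the $r_0$ shared with condition~2) and on the Lebesgue measurability of $Q$ also supply details the paper leaves implicit.
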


\section{The case where the family of mappings omits at least two points}

Let $D\subset {\Bbb R}^n,$ $f:D\rightarrow {\Bbb R}^n$ be a discrete
open mapping, $\beta: [a,\,b)\rightarrow {\Bbb R}^n$ be a path, and
$x\in\,f^{\,-1}(\beta(a)).$ A path $\alpha: [a,\,c)\rightarrow D$ is
called a {\it maximal $f$-lifting} of $\beta$ starting at $x,$ if
$(1)\quad \alpha(a)=x\,;$ $(2)\quad f\circ\alpha=\beta|_{[a,\,c)};$
$(3)$\quad for $c<c^{\prime}\leqslant b,$ there is no a path
$\alpha^{\prime}: [a,\,c^{\prime})\rightarrow D$ such that
$\alpha=\alpha^{\prime}|_{[a,\,c)}$ and $f\circ
\alpha^{\,\prime}=\beta|_{[a,\,c^{\prime})}.$ Similarly, we may
define a maximal $f$-lifting $\alpha: (c,\,b]\rightarrow D$ of a
path $\beta: (a,\,b]\rightarrow {\Bbb R}^n$ ending at
$x\in\,f^{\,-1}(\beta(b)).$ The following assertion holds
(see~\cite[Lemma~3.12]{MRV}).

\medskip
\begin{proposition}\label{pr3}
{\it Let $f:D\rightarrow {\Bbb R}^n,$ $n\geqslant 2,$ be an open
discrete mapping, let $x_0\in D,$ and let $\beta: [a,\,b)\rightarrow
{\Bbb R}^n$ be a path such that $\beta(a)=f(x_0)$ and such that
either $\lim\limits_{t\rightarrow b}\beta(t)$ exists, or
$\beta(t)\rightarrow \partial f(D)$ as $t\rightarrow b.$ Then
$\beta$ has a maximal $f$-lifting $\alpha: [a,\,c)\rightarrow D$
starting at $x_0.$ If $\alpha(t)\rightarrow x_1\in D$ as
$t\rightarrow c,$ then $c=b$ and $f(x_1)=\lim\limits_{t\rightarrow
b}\beta(t).$ Otherwise $\alpha(t)\rightarrow \partial D$ as
$t\rightarrow c.$}
\end{proposition}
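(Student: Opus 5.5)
The plan is to reproduce the standard argument of \cite[Lemma~3.12]{MRV}. It rests on two ingredients: local path lifting in normal neighborhoods of an open discrete mapping, and Zorn's lemma for the maximal object. Recall that for an open discrete $f$ and $x\in D$, every sufficiently small $r>0$ gives a normal neighborhood $U=U(x,f,r)$. This is the $x$-component of $f^{-1}(B(f(x),r))$, with $\overline{U}\subset D$ compact, $f(U)=B(f(x),r)$, $f(\partial U)=S(f(x),r)$, and $\overline{U}\cap f^{-1}(f(x))=\{x\}$. The map $f|_U:U\to B(f(x),r)$ is then proper, open and discrete. For such maps any path in $B(f(x),r)$ beginning at $f(x)$ has a lifting starting at $x$ (Rickman's local lifting theorem). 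Thus every path $\beta$ admits a lifting on some nontrivial interval $[a,a+\delta)$ starting at $x_0$. I will take this local lifting as known.

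\emph{Existence of a maximal lifting.} Let $\mathcal{L}$ be the set of all liftings $\alpha:[a,c)\to D$ of $\beta|_{[a,c)}$ with $\alpha(a)=x_0$, $a<c\le b$. Order it by extension. By the local lifting, $\mathcal{L}\ne\varnothing$. Any chain has an upper bound, namely the union of its members, defined on the union of their intervals. Zorn's lemma then gives a maximal element $\alpha:[a,c)\to D$, which is a maximal $f$-lifting in the sense of~(1)--(3).

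\emph{Endpoint behaviour.} Suppose $\alpha(t)\to x_1\in D$ as $t\to c$. By continuity, $f(x_1)=\lim_{t\to c}\beta(t)=\beta(c)$ if $c<b$. If $c<b$, extend $\alpha$ by $\alpha(c)=x_1$ and lift $\beta|_{[c,c+\delta)}$ from $x_1$ via a normal neighborhood of $x_1$. This extends $\alpha$ and contradicts maximality, so $c=b$ and $f(x_1)=\lim_{t\to b}\beta(t)$.

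Now suppose $\alpha(t)\not\to\partial D$. Then the cluster set $C(\alpha,c)$, which is connected and nonempty in $\overline{{\Bbb R}^n}$, contains a point $x_1\in D$. I claim that $\alpha(t)\to x_1$, which reduces to the previous case. First, $\beta(t)$ cannot tend to $\partial f(D)$. Along a sequence $t_k\to c$ with $\alpha(t_k)\to x_1$ we have $\beta(t_k)=f(\alpha(t_k))\to f(x_1)$, and $f(x_1)$ is an interior point of the open set $f(D)$. Hence $\lim_{t\to c}\beta(t)$ exists (if $c<b$ it equals $\beta(c)$) and equals $y:=f(x_1)$. Choose a normal neighborhood $U=U(x_1,f,r)$. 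Suppose $\alpha$ did not converge to $x_1$. Then $\alpha(t)$ comes arbitrarily close to $x_1$ and also leaves a fixed smaller neighborhood, for $t$ arbitrarily close to $c$. By connectedness it crosses $\partial U$ at times $s_k\to c$, after shrinking $U$ if needed. By compactness of $\partial U\subset D$, a subsequence $\alpha(s_k)\to z\in\partial U$, so $f(z)=\lim f(\alpha(s_k))=y$. This is impossible, since $f(\partial U)\subset S(y,r)$. Therefore $\alpha(t)\to x_1$, and otherwise $\alpha(t)\to\partial D$.

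The main obstacle is the local lifting inside a normal neighborhood, which needs the properness and discreteness of $f|_U$ and a compactness argument over nested normal neighborhoods. Since the paper quotes this result from \cite{MRV}, I would cite it rather than reprove it. The cluster-set step also requires care to ensure the crossing points $\alpha(s_k)$ stay in the compact set $\partial U$.
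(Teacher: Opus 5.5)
Your proposal is correct and reproduces the standard argument of \cite[Lemma~3.12]{MRV}, which the paper only cites and does not prove. You build a maximal lifting from local liftings in normal neighborhoods via Zorn's lemma, and you exclude $c<b$ when $\alpha(t)\to x_1\in D$ by extending the lifting from $x_1$; a cluster point $x_1\in D$ then forces $\beta(t)\to f(x_1)$ because $f(D)$ is open, and the crossing argument through $\partial U$ (where $f(\partial U)=S(f(x_1),r)$) forces $\alpha(t)\to x_1$, which is exactly the intended reasoning.
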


\medskip
Recall that a {\it path} will be called a continuous mapping
$\gamma: I\rightarrow {\Bbb R}^n$ of a segment, interval or
half-interval $I\subset {\Bbb R}$ into $n$-dimensional Euclidean
space ${\Bbb R}^n.$ As usual, the following set is called the {\it
locus} of a path $\gamma: I\rightarrow {\Bbb R}^n:$
$$|\gamma|=\{x\in {\Bbb R}^n: \exists\, t\in [a, b]:
\gamma(t)=x\}\,.$$
We also say that the paths $\gamma_1$ and $\gamma_2$ do not
intersect each other if their loci do not intersect as sets in
${\Bbb R}^n.$ A path $\gamma:I\rightarrow {\Bbb R}^n$ is called a
{\it Jordan arc,} if $\gamma$ is a homeomorphism of $I$ onto
$|\gamma|.$ The following lemma was proved in
\cite[Lemma~2.1]{SevSkv$_1$}.

\medskip
\begin{lemma}\label{lem1H}{\it\,
Let $n\geqslant 2, $ and let $D$ be a domain in ${\Bbb R}^n$ that is
locally connected on its boundary. Then every two pairs of points
$a\in D, b\in \overline{D}$ and $c\in D, d\in \overline{D}$ such
that $a\ne c$ and $b\ne d$ can be joined by non-intersecting paths
$\gamma_1:[0, 1]\rightarrow \overline{D}$ and $\gamma_2:[0,
1]\rightarrow \overline{D}$ so that $\gamma_i(t)\in D$ for all $t\in
(0, 1)$ and all $i=1,2,$ while $\gamma_1(0)=a,$ $\gamma_1(1)=b,$
$\gamma_2(0)=c$ and $\gamma_2(1)=d.$}
\end{lemma}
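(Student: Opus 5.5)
The statement is the assertion of \cite[Lemma~2.1]{SevSkv$_1$}. I would prove it in three stages: accessibility of boundary points, construction of short disjoint ``tails'' near $b$ and $d,$ and then joining the tails to $a$ and $c$ in general position.

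\emph{Accessibility.} Since $D$ is locally connected at every point of $\partial D,$ every $b\in\partial D$ has neighbourhoods $U_k,$ $k=1,2,\ldots,$ with ${\rm diam}\,U_k<2^{-k}$ and $U_k\cap D$ connected, hence path connected, being open in ${\Bbb R}^n.$ We may take $U_{k+1}\subset U_k.$ Pick $x_k\in U_k\cap D$ and join $x_k$ to $x_{k+1}$ inside $U_k\cap D$ by a polygonal path. The concatenation, reparametrized on $[1/2,1)$ and extended by the value $b$ at $t=1,$ is a path with interior in $D$ ending at $b.$ Since its pieces are polygonal, loops can be removed and the path can be taken to be a Jordan arc. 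If $b\in D,$ no tail is needed. The same is done for $d.$

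\emph{Disjoint tails.} Since $b\ne d,$ fix $r>0$ such that the balls $B(b,r)$ and $B(d,r)$ are disjoint. If $b\in D$ or $d\in D,$ also shrink $r$ so that $a,c\notin$ these balls, unless the point coincides with $a$ or $c$; that degenerate case is handled by a trivial modification. Running the construction above with all $U_k$ inside these balls gives Jordan arcs $\tau_b$ from a point $b'\in D$ to $b,$ and $\tau_d$ from $d'\in D$ to $d.$ They are automatically disjoint, and each is polygonal away from its endpoint on $\partial D.$ It remains to join $a$ to $b'$ and $c$ to $d'$ in $D$ so that the two resulting arcs miss each other and miss the other tail.

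\emph{Joining in general position.} Let $\gamma_1$ be a polygonal arc from $a$ to $b'$ in $D.$ Its image and $|\tau_b|\setminus\{b\}$ are locally finite polygonal sets, and $\gamma_1$ can be perturbed off $|\tau_d|$ if needed. For $n\geqslant 3,$ a polygonal arc $\sigma$ from $c$ to $d'$ in $D$ can be perturbed, within a small neighbourhood of each of its finitely many segments, so as to avoid the one-dimensional set $|\gamma_1|\cup|\tau_b|$: two segments in general position in dimension $\geqslant 3$ are disjoint. Local finiteness of $\tau_b$ near $b$ is ensured because $\sigma$ is compact in $D,$ hence at positive distance from $b.$ Concatenating gives $\gamma_1\cup\tau_b$ and $\sigma\cup\tau_d,$ which are disjoint paths as required.

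\emph{Main obstacle: the case $n=2.$} Here two polygonal arcs generically cross, so the perturbation argument fails. Instead I would show that $D\setminus K$ is connected, where $K=|\gamma_1|\cup|\tau_b|\setminus\{b\}$ is a polygonal arc that is closed in $D,$ has one end at an interior point $a,$ and whose other end tends to the single boundary point $b.$ The plan is to modify $\sigma$ locally near each crossing: the arc $K$ does not separate any small disc $B$ around an interior crossing point, since $B\setminus K$ is connected for a straight segment passing through or ending in $B.$ Alternatively, one can go around the free end $a$ of $K$: follow $K$ on one side, along a thin polygonal strip, back to $a,$ turn around $a,$ and return on the other side. This is possible because the strip stays inside $D$ by compactness of the relevant part of $K.$ Each crossing of $\sigma$ with $K$ is thereby removed, giving a path from $c$ to $d'$ in $D\setminus K$ and completing the proof.
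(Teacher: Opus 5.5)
The paper gives no proof of this lemma (it only quotes it from an earlier paper), so your argument is measured against what a complete proof needs. Your accessibility step and the general-position step for $n\geqslant 3$ are fine. There is a genuine gap in the planar case. Your main planar argument rests on the claim that $B\setminus K$ is connected when $K\cap B$ is a straight segment passing through a small disc $B$. This is false: a chord cuts a disc into two half-discs. So a transversal crossing of $\sigma$ with $K$ can never be removed by a change of $\sigma$ supported near the crossing. That $K$ does not separate $D$ is a global fact, and it holds only because $K$ has a free end at the interior point $a$; an arc from $b$ to another boundary point can separate $D$. For the same reason, your remark that $\gamma_1$ ``can be perturbed off $|\tau_d|$'' fails in ${\Bbb R}^2$.

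So your ``alternative'' has to become the actual argument, and it must be made precise.

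The quickest route is Janiszewski's theorem. Arrange $A:=|\gamma_1|\cup|\tau_b|$ to be an arc by stopping $\gamma_1$ at its first point on $|\tau_b|$. Apply the theorem on $\overline{{\Bbb R}^2}$ to $A$ and $F:=\overline{{\Bbb R}^2}\setminus D$. An arc does not separate the sphere, $F$ separates no two points of the connected set $D$, and $A\cap F\subset\{b\}$ is connected. Hence $D\setminus A$ is connected.

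Your detour around $a$ can also be made rigorous. Since $\sigma$ is compact in $D$, $\sigma\cap K$ lies in a compact subarc $K_0\ni a$ of $K$. Take a thin regular neighbourhood $N\subset D$ of $K_0$ with $c,d'\notin N$, whose boundary polygon $J$ meets $K$ in a single point $e$. Then replace the part of $\sigma$ between its first and last points on $J$ by an arc of $J\setminus\{e\}$.

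Several smaller points need fixing as well.
\begin{itemize}
\item Build $\tau_d$ \emph{after} $\gamma_1$, inside a ball about $d$ disjoint from the compact set $A$, having chosen $\gamma_1$ to avoid $c$ (and $d$, if $d\in D$). This removes any need to push $\gamma_1$ off $\tau_d$.
\item The ball about $b$ must miss $c$ in every case, not only when $b$ or $d$ lies in $D$.
\item The degenerate case is not ``trivial''. If $b=c$ or $d=a$, which the hypotheses permit, the two paths necessarily share a point. The lemma as stated therefore needs $\{a,b\}\cap\{c,d\}=\varnothing$, which does hold in its application in Case~1.1 of Theorem~\ref{th3}.
\end{itemize}
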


\medskip
The following statement is a simple consequence of the well-known
V\"{a}is\"{a}l\"{a} theorem on the lower estimate of the modulus of
families of paths joining two continua that intersect the plates of
a spherical ring, see e.g. \cite[Lemma~11.2]{Sev$_2$}.

\medskip
\begin{lemma}\label{lem2C} {\bf(V\"{a}is\"{a}l\"{a}'s lemma on
the weak flatness of inner points).} {\,\it Let $n\geqslant 2 $, let
$D$ be a domain in $\overline{{\Bbb R}^n},$ and let $x_0\in D.$ Then
for each $P>0$ and each neighborhood $U$ of point $x_0$ there is a
neighborhood $V\subset U$ of the same point such that $M(\Gamma(E,
F, D))> P$ for any continua $E, F \subset D $ intersecting $\partial
U$ and $\partial V.$}
\end{lemma}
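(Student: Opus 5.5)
We reduce the statement to V\"{a}is\"{a}l\"{a}'s lower bound for the modulus of paths joining two continua that both cross a spherical ring. In the form we use: there is a constant $c_n>0$ such that, for $0<r<R<\infty$ and continua $E^{\,\prime},F^{\,\prime}\subset \overline{A(x_0,r,R)}$ each meeting both $S(x_0,r)$ and $S(x_0,R)$,
$$M(\Gamma(E^{\,\prime},F^{\,\prime},A(x_0,r,R)))\geqslant c_n\log\frac{R}{r}$$
(see \cite{Va}, \cite[Lemma~11.2]{Sev$_2$}).

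\textbf{Reduction to a finite point.} First assume $x_0\ne\infty$. If $x_0=\infty$, we apply a M\"{o}bius inversion $\varphi$ with $\varphi(\infty)=0$. Since $\varphi$ is a homeomorphism of $\overline{{\Bbb R}^n}$, it carries $D$, $U$, continua and boundaries to the corresponding objects. Since $\varphi$ is conformal, it preserves $M$, so the statement for $\varphi(D)$, $\varphi(U)$ at $0$ gives the one at $\infty$.

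\textbf{Choice of the balls.} Given $P>0$ and a neighborhood $U$ of $x_0$, choose $R>0$ with $\overline{B(x_0,R)}\subset U\cap D$. Then choose $0<r<R$ so small that $c_n\log(R/r)>P$, and put $V:=B(x_0,r)$.

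\textbf{Extracting crossing subcontinua.} Let $E,F\subset D$ be continua meeting $\partial U$ and $\partial V$. If $E\cap F\ne\varnothing$, then $\Gamma(E,F,D)$ contains constant paths, no $\rho$ is admissible, and $M(\Gamma(E,F,D))=\infty>P$. So assume $E\cap F=\varnothing$.

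The continuum $E$ contains a point with $|x-x_0|=r$ and a point of $\partial U$, which lies outside $B(x_0,R)$. By the boundary bumping theorem for continua (\cite{Ku}), the component of $E\cap\overline{A(x_0,r,R)}$ containing a point of $S(x_0,r)$ must reach $S(x_0,R)$. This gives a continuum $E^{\,\prime}\subset E\cap \overline{A(x_0,r,R)}$ meeting both spheres, and we obtain $F^{\,\prime}\subset F$ in the same way.

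\textbf{Conclusion.} Every path of $\Gamma(E^{\,\prime},F^{\,\prime},A(x_0,r,R))$ belongs to $\Gamma(E,F,D)$, because $A(x_0,r,R)\subset D$. By monotonicity of the modulus and the estimate above,
$$M(\Gamma(E,F,D))\geqslant M(\Gamma(E^{\,\prime},F^{\,\prime},A(x_0,r,R)))\geqslant c_n\log\frac{R}{r}>P.$$

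The only nontrivial ingredient is V\"{a}is\"{a}l\"{a}'s lower bound itself; we take it from the literature rather than reprove it. The remaining care point is the topological extraction of the crossing subcontinua, which is exactly where the boundary bumping theorem is needed.
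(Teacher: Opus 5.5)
Your route is the one the paper intends. The paper gives no proof; it only calls the lemma a simple consequence of V\"{a}is\"{a}l\"{a}'s lower bound for the modulus of paths joining two continua that cross a spherical ring. Your M\"{o}bius reduction, the choice $V=B(x_0,r)$ with $c_n\log(R/r)>P$, and the final monotonicity step are all fine.

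The one step that fails as written is the extraction of $E^{\,\prime}$. It is false that the component of $K:=E\cap\overline{A(x_0,r,R)}$ containing a point of $S(x_0,r)$ must reach $S(x_0,R)$. For example, let $E$ be an arc that starts at $p\in S(x_0,r)$, immediately dips into $B(x_0,r)$, leaves it through another point $p_1\in S(x_0,r)$, and then runs radially out to $\partial U$. The component of $K$ containing $p$ is then $\{p\}$. Boundary bumping does not give your claim either. Applied to the proper closed subset $K$ of the continuum $E$, it only says that every component of $K$ meets $\overline{E\setminus K}$, i.e.\ meets $S(x_0,r)\cup S(x_0,R)$. That is no information for a component that already touches $S(x_0,r)$.

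What you need is that \emph{some} component of $K$ meets both spheres. This is true, but the component has to be chosen starting from the outer end.
\begin{itemize}
\item Take $q\in E\setminus B(x_0,R)$, e.g.\ a point of $E\cap\partial U$. Let $C$ be the component of the closed set $E\setminus B(x_0,r)$ containing $q$.
\item Either $C=E\ni p$, or boundary bumping gives a point of $C$ in $\overline{B(x_0,r)}$, hence in $S(x_0,r)$. Pick $p^{\,\prime}\in C\cap S(x_0,r)$.
\item Let $E^{\,\prime}$ be the component of $C\cap\overline{B(x_0,R)}$ containing $p^{\,\prime}$. Either $E^{\,\prime}=C\ni q$, which forces $q\in S(x_0,R)$, or boundary bumping gives $E^{\,\prime}\cap S(x_0,R)\neq\varnothing$.
\end{itemize}
Then $E^{\,\prime}\subset\overline{A(x_0,r,R)}$ meets both spheres, and the rest of your argument goes through.

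Alternatively, you can avoid the extraction altogether. By connectedness, every sphere $S(x_0,t)$ with $r<t<R$ meets both $E$ and $F$. Arcs of that sphere joining them belong to $\Gamma(E,F,D)$, because $S(x_0,t)\subset D$. Integrating V\"{a}is\"{a}l\"{a}'s spherical cap inequality over $t\in(r,R)$ then gives $c_n\log(R/r)$ directly.
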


\medskip
Given a domain $D\subset {\Bbb R}^n,$ $n\geqslant 2,$ a Lebesgue
measurable function $Q:{\Bbb R}^n\rightarrow [0, \infty]$ and points
$a, b\in {\Bbb R}^n,$ $a\ne b,$ we denote by $\frak{F}_{Q, a}(D)$ a
family of all open discrete mappings $f:D\rightarrow {\Bbb
R}^n\setminus\{a, b\}$ satisfying the
relations~(\ref{eq2*A})--(\ref{eqA2}) at every point $y_0\in {\Bbb
R}^n.$ The following statement holds.

\medskip
\begin{theorem}\label{th3}
{\it\, Assume that, the following condition holds: for each point
$y_0\in {\Bbb R}^n$ there exists $r_0=r_0(y_0)>0$ such that, for
every $0<r_1<r_2<r_0$ there is a set $E\subset[r_1, r_2]$ of a
positive linear Lebesgue measure such that the function $Q$ is
integrable with respect to $\mathcal{H}^{n-1}$ over the spheres
$S(y_0, r)$ for every $r\in E.$ Then the family of mappings
$\frak{F}_{Q, a, b}(D)$ is equicontinuous at each point $x_0\in D.$
}
\end{theorem}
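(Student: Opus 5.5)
We argue by contradiction, exploiting the two omitted values $a\ne b$. Suppose the family $\frak{F}_{Q,a,b}(D)$ is not equicontinuous at $x_0\in D$. Then there exist $\varepsilon_0>0$, points $x_k\to x_0$ and mappings $f_k$ in the family with $h(f_k(x_k),f_k(x_0))\geqslant\varepsilon_0$. Passing to subsequences, we may assume $f_k(x_0)\to z_1$ and $f_k(x_k)\to z_2$ in $\overline{{\Bbb R}^n}$, with $h(z_1,z_2)\geqslant\varepsilon_0$.

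The plan is to lift two disjoint paths in the image and obtain a contradiction between Lemma~\ref{lem2C} and the upper modulus bound (\ref{eq2*A}). The image point $y_0$ for the inverse Poletsky inequality will be one of the omitted points, say $a$. Put $D_0:={\Bbb R}^n\setminus\{a,b\}$; this domain is locally connected on its boundary.

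First I apply Lemma~\ref{lem1H} in $D_0$. It gives non-intersecting paths $\beta_k$ joining $f_k(x_k)$ to $a$ and $\gamma_k$ joining $f_k(x_0)$ to $b$, each lying in $D_0$ except at its endpoint. One extra arrangement is needed: far enough along the sequence, $f_k(x_0)$ and $f_k(x_k)$ stay in small neighbourhoods of $z_1,z_2$, so I build the paths from fixed pieces joining $z_1\to b$ and $z_2\to a$ plus short connectors. I also pick them so that $\gamma_k$ stays outside a fixed ball $B(a,\delta)$, while $\beta_k$ enters it. This requires a choice of labelling: since $z_1\ne z_2$, at least one of them differs from $a$, and I swap the roles of $a$ and $b$ (and of $x_0,x_k$) if necessary so that $z_1\ne a$.

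By Proposition~\ref{pr3}, $\beta_k$ and $\gamma_k$ have maximal $f_k$-liftings $\alpha_k$ starting at $x_k$ and $\alpha_k^{\,\prime}$ starting at $x_0$. Since $f_k$ omits $a$ and $b$, neither lifting can converge to a point of $D$ (that would force $f_k(x_1)\in\{a,b\}$). Hence both tend to $\partial D$, and in particular both leave a fixed ball $U=B(x_0,r)\Subset D$. So $|\alpha_k|$ and $|\alpha^{\,\prime}_k|$ contain continua $E_k\ni x_k$ and $F_k\ni x_0$ meeting $\partial U$ and $\partial V$, for any neighbourhood $V\subset U$, once $k$ is large. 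By Lemma~\ref{lem2C}, $M(\Gamma(E_k,F_k,U))\to\infty$ after shrinking $V$ appropriately.

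For the upper bound, the image of each path in $\Gamma(E_k,F_k,U)$ joins $|\beta_k|$ to $|\gamma_k|$. The path $|\gamma_k|$ lies outside $B(a,\delta)$, and near $a$ the path $|\beta_k|$ reaches the sphere $S(a,\varepsilon)$ for every small $\varepsilon$. The delicate point is the part of $\beta_k$ far from $a$. I handle it by a segment decomposition: choose $\varepsilon<r_1<r_2<\min\{\delta,r_0(a)\}$ and set the portion of $|\beta_k|$ inside $B(a,r_1)$ apart. Paths from that portion to $|\gamma_k|$ contain a subpath in $\Gamma_{f_k}(a,r_1,r_2)$. Paths starting on the remaining portion of $|\beta_k|$, which stays at bounded distance from $|\gamma_k|$, are controlled using the pair of points $z_1,z_2$ and a second application of (\ref{eq2*A}) at finitely many centers covering the fixed compact piece.

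I take $\eta=1/(\mathcal{H}$-type normalisation$)$ built from $E\subset[r_1,r_2]$: let
$$q(r)=\int_{S(a,r)}Q\,d\mathcal{H}^{n-1}<\infty \text{ on } E,$$
and $\eta=q^{-1/(n-1)}/I$ on $E$, with $I=\int_E q^{-1/(n-1)}dr$. Then (\ref{eq2*A}) gives the bound $\leqslant I^{1-n}<\infty$ uniformly in $k$, which contradicts the blow-up. The main obstacle is this splitting step, namely guaranteeing that every lifted connecting path yields a subpath of some $\Gamma_{f_k}(y,r_1,r_2)$ with fixed finite data independent of $k$. I would first try to avoid the splitting altogether by choosing $\beta_k$ entirely inside $B(a,r_2)$ apart from a fixed segment, and, using Theorem~\ref{th2}-type reasoning, covering that segment by finitely many balls where the hypothesis on $Q$ holds.
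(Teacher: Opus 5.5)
Your argument is essentially the paper's proof, specifically its Case~1.1. The ingredients match:
\begin{itemize}
\item fixed disjoint image paths from the limit points to the omitted values $a,b$, plus short connectors;
\item maximal liftings via Proposition~\ref{pr3}, which must run to $\partial D$ because $a,b$ are omitted;
\item Lemma~\ref{lem2C} to get $M(\Gamma_m)\to\infty$;
\item a uniform upper bound from (\ref{eq2*A}).
\end{itemize}
The ``splitting step'' you call the main obstacle is not one. Once $|\beta_k|\subset A_1$ and $|\gamma_k|\subset A_2$ for fixed sets with $A_1$ compact and $d(A_1,A_2)\geqslant\varepsilon^*>0$, the paper covers all of $A_1$ by finitely many balls $B(y_i,r_i/4)$ with $r_i=\min\{\varepsilon^*,r_0(y_i)\}$. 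This includes the neighbourhood of $a$, so no separate ring at $a$ is needed. Since $B(y_i,r_i/2)\cap A_2=\varnothing$, every path of $\Gamma_m$ has a subpath in some $\Gamma_{f_m}(y_i,r_i/4,r_i/2)$. Hence $M(\Gamma_m)\leqslant\sum_{i=1}^p\omega_{n-1}I_i^{1-n}$, independently of $m$. No input from Theorem~\ref{th2} is needed; appealing to it would be circular, since the paper derives Theorem~\ref{th2} from this result.

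What your plan does not cover is the case where one of $z_1,z_2$ equals $\infty$ (the paper's Case~2). There, ``fixed piece plus short connector'' gives no path inside a fixed compact set. Your relabelling only secures $z_1\ne a$, so it may leave $z_2=\infty$, and then the finite covering of the set carrying $\beta_k$ is impossible. The paper arranges that the infinite limit is the one at $x_0$. It then replaces the path to $b$ by the ray $t\mapsto(f_k(x_0)-z_2)t+z_2$, $t\geqslant 1$. For large $k$ this ray stays outside $B(z_2,R+\varepsilon^*)$, where $A_1\subset B(z_2,R)$. Its maximal lifting must leave every compact subset of $D$ because its limit $\infty$ is not a value of $f_k$, so only the omitted point $a$ is used in this case.

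Three smaller repairs are needed:
\begin{itemize}
\item The labelling must give both $z_1\ne a$ and $z_2\ne b$; otherwise the two fixed pieces share an endpoint and $d(A_1,A_2)=0$. One swap of $a$ and $b$ always achieves this.
\item The coincidences $z_2=a$ or $z_1=b$ only make things easier, since the corresponding fixed piece is trivial (cf.\ the single ring in Cases~1.2--1.3).
\item Build $\eta$ from $\widetilde Q=\max\{Q,1\}$ rather than $Q$. With $Q$, $q$ may vanish and $I=\int_E q^{-1/(n-1)}\,dr$ may be infinite. With $\widetilde Q$ the integrand is $O(1/r)$, so $0<I<\infty$.
\end{itemize}
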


\medskip
\begin{proof} The proof makes substantial use of the scheme we applied
earlier for the case of uniformly bounded families of mappings,
see~\cite[Theorem~3]{Sev$_1$}, cf.~\cite[Theorem~1.1]{SevSkv$_3$}.
Let us prove Theorem~\ref{th3} by the contradiction. Suppose that
the family $\frak{F}_{Q, a, b}(D)$ is not equicontinuous at some
point $x_0\in D.$ Then there is $\varepsilon_0>0,$ for which the
following condition is true: for any $m\in{\Bbb N}$ there is $x_m
\in D$ with $|x_m-x_0|<1/m,$ and a mapping $f_m \in \frak{F}_{Q, a,
b}(D)$ such that
\begin{equation}\label{eq13A}
h(f_m(x_m), f_m(x_0))\geqslant \varepsilon_0.
 \end{equation}
We may consider that $D$ is bounded. Since $\overline{{\Bbb R}^n}$
is a compact space, we may consider that sequences $f_m(x_m)$ and
$f_m(x_0)$ converge to $\overline{x_1}$ and $\overline{x_2}\in
\overline{{\Bbb R}^n}$ as $m\rightarrow\infty,$ respectively.
By~(\ref{eq13A}) and by the continuity of the metrics, we obtain
that $h(\overline{x_1}, \overline{x_2})\geqslant\varepsilon_0/2.$
Next we need to consider the following two fundamentally different
cases: {\bf Case~1: $\overline{x_1}\ne\infty\ne \overline{x_2};$}
{\bf Case~2: One of the points $\overline{x_1}$ or $\overline{x_2}$
is finite, and one is equal to infinity.}

\medskip
{\bf Case 1: $\overline{x_1}\ne\infty\ne \overline{x_2}.$} There are
also four major subcases possible here:

\medskip
{\bf Case 1.1: $a\ne \overline{x_1}\ne b, a\ne \overline{x_2}\ne
b;$}

\medskip
{\bf Case 1.2: exactly one of the points, $\overline{x_1}$ or
$\overline{x_2}$ coincides with either $a$ or $b;$}

\medskip
{\bf Case 1.3: both points $\overline{x_1}$ and $\overline{x_2}$
coincide with $a$ and $b.$}

\medskip
We should look at all three cases in detail.

\medskip
{\bf Case 1.1: $a\ne \overline{x_1}\ne b, a\ne \overline{x_2}\ne
b.$} By Lemma~\ref{lem1H} we may join points $a$ and
$\overline{x_1},$ as well as the points $b$ and $\overline{x_2}$ by
disjoint paths $\gamma_1 \colon [1/2, 1] \rightarrow {\Bbb R}^n$ and
$\gamma_2 \colon[1/2, 1] \rightarrow {\Bbb R}^n$ respectively. Let
$R_1>0$ be such that $\overline{B(\overline{x_1}, R_1)}\cap
|\gamma_2|=\varnothing,$ and let $R_2>0$ be such that
$$
(\overline{B(\overline{x_1},R_1)}\cup|\gamma_1|)\cap\overline{B(\overline{x_2},R_2)}=\varnothing.
 $$
We may assume that $f_m(x_m)\in B(\overline{x_1}, R_1)$ and
$f_m(x_0)\in B(\overline{x_2},R_2)$ for any $m\geqslant 1.$ Join the
points $f_m(x_m)$ and $\overline{x_1}$ by a path
$\alpha^{\,*}_m\colon [0, 1/2] \rightarrow B(\overline {x_1}, R_1),$
and join the point $f_m(x_0)$ with the point $\overline {x_2}$ by a
path $\beta^{\,*}_m\colon[0, 1/2]\rightarrow B(\overline{x_2}, R_2)$
(see Figure~\ref{figure2}).
 \begin{figure}
\centerline{\includegraphics[scale=0.5]{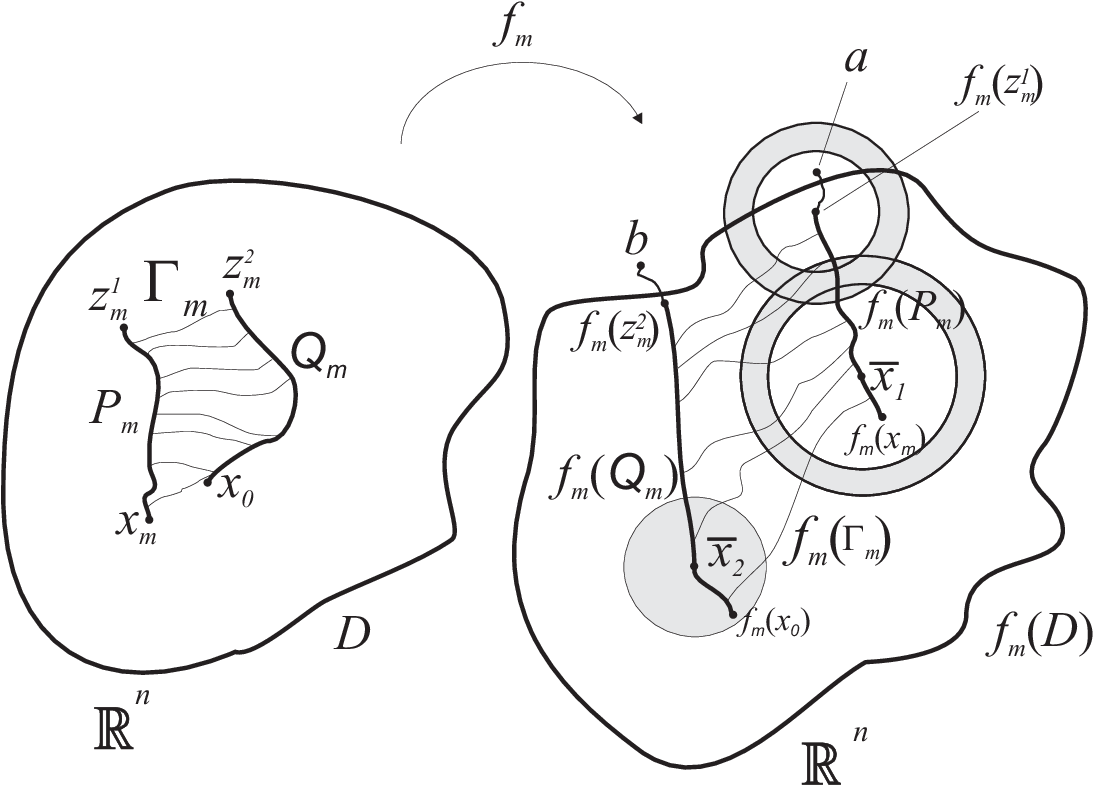}} \caption{To
the proof of Theorem~\ref{th3}, Case~1.1}\label{figure2}
 \end{figure} Set
$$
\alpha_m(t)=\quad\left\{\begin{array}{rr}
\alpha^*_m(t), & t\in [0, 1/2],\\
\gamma_1(t), & t\in [1/2, 1]\end{array} \right.\,,\quad
\beta_m(t)=\quad\left\{
\begin{array}{rr}
\beta^*_m(t), & t\in [0, 1/2],\\
\gamma_2(t), & t\in [1/2, 1].\end{array} \right.
 $$
By the construction, the sets
 $$
A_1:=|\gamma_1|\cup \overline{B(\overline{x_1}, R_1)}\,,\quad
A_2:=|\gamma_2|\cup \overline{B(\overline{x_2}, R_2)}
 $$
do not intersect, in particular, there is $\varepsilon^{\,*}_1>0$
such that
\begin{equation}\label{eq1J}
d(A_1,A_2)\geqslant \varepsilon^{\,*}_1>0\,.
 \end{equation}
Let $r_0=r_0(y)>0$ be the number from the conditions of the theorem,
defined for each $y\in{\Bbb R}^n.$ Set
$$r_*(y):=\min\{\varepsilon^{\,*}_1, r_0(y)\}\,.$$
Cover the set $A_1$ with balls $B(y, r_*/4),$ $y\in A_1.$ Note that
$|\gamma_1 |$ is a compact set in ${\Bbb R}^n$ as a continuous image
of the compact set $[1/2, 1]$ under the mapping $\gamma_1.$ Then, by
the Heine-Borel-Lebesgue lemma, there is a finite subcover
$\bigcup\limits_{i=1}^pB(y_i, r_*/4)$ of the set $A_1.$ In other
words,
\begin{equation}\label{eq2A}
A_1\subset \bigcup\limits_{i=1}^pB(y_i,r_i/4)\,,\qquad 1\leqslant
p<\infty\,,
 \end{equation}
where $r_i$ denotes $r_*(y_i)$ for any $y_i\in {\Bbb R}^n.$

\medskip
Let $\alpha^0_m:[0, c_1)\rightarrow D$ and $\beta^0_m:[0,
c_2)\rightarrow D$ be maximal $f_m$-liftings of paths $\alpha_m$ and
$\beta_m$ starting at points $x_m$ and $x_0,$ respectively. Such
maximal liftings exist by Proposition~\ref{pr3}. By the same
Proposition,  $\alpha^0_m(t_k)\rightarrow
\partial D$ and $\beta^0_m(t^{\,\prime}_k)\rightarrow
\partial D$ for some sequences $t_k\rightarrow c_1-0$
and $t^{\,\prime}_k\rightarrow c_2-0,$ $k\rightarrow\infty.$ Then
there are sequences of points $z^1_m\in |\alpha_m^0|$ and $z^2_m\in
|\beta_m^0|$ such that $d(z^1_m,
\partial D)<1/m$ and $d(z^2_m,\partial D)<1/m.$
Since $\overline{D}$ is a compactum, we may consider that
$z^1_m\rightarrow p_1\in
\partial D$ and $z^2_m\rightarrow p_2\in
\partial D$ as $m\rightarrow\infty,$ where $p_1\ne\infty\ne p_2.$
Let $P_m$ be the part of the locus of the path $\alpha^0_m$ in $D,$
located between the points $x_m$ and $z^1_m,$ and $Q_m$ the part of
the locus of the path $\beta^0_m$ in $D,$ located between the points
$x_0$ and $z^2_m.$ By the construction, $f_m(P_m)\subset A_1$ and
$f_m(Q_m)\subset A_2.$ Put $\Gamma_m:=\Gamma(P_m, Q_m, D).$ Recall
that we write $\Gamma_1>\Gamma_2$ if and only if each path
$\gamma_1\in \Gamma_1$ has a subpath $\gamma_2\in \Gamma_2.$ (In
other words, if $\gamma_1\colon I\rightarrow{\Bbb R}^n,$ then
$\gamma_2\colon J\rightarrow{\Bbb R}^n,$ where $J\subset I$ and
$\gamma_2(t)=\gamma_1(t)$ for $t\in J,$ $I, J$ are segments,
intervals, or half-intervals). Then, by~(\ref{eq1J}) and
(\ref{eq2A}), and by~\cite[Theorem~1.I.5.46]{Ku}, we obtain that
\begin{equation}\label{eq5}
\Gamma_m>\bigcup\limits_{i=1}^p\Gamma_{im}\,,
 \end{equation}
where $\Gamma_{im}:=\Gamma_{f_m}(y_i, r_i/4, r_i/2).$ Set
$\widetilde{Q}(y)=\max\{Q(y), 1\}$ and
$$\widetilde{q}_{y_i}(r)=\frac{1}{\omega_{n-1}r^{n-1}}\int\limits_{S(y_i,
r)}\widetilde{Q}(y)\,d\mathcal{A}\,,$$
where $\omega_{n-1}$ denotes the area of the unit sphere in ${\Bbb
R}^n,$ and $d\mathcal{A}=d\mathcal{H}^{n-1}$ is the area element on
$S(y_i, r).$
Now, we have also that $\widetilde{q}_{y_i}(r)\ne \infty$ for any
$r\in E\subset [r_i/4,r_i/2],$ where $E$ is some set of a positive
linear measure which exists by the assumption.
Set
$$I_i=I_i(y_i,r_i/4,r_i/2)=\int\limits_{r_i/4}^{r_i/2}\
\frac{dr}{r\widetilde{q}_{y_i}^{\frac{1}{n-1}}(r)}\,.$$
Observe that $I_i\ne 0,$ because $\widetilde{q}_{y_i}(r)\ne \infty$
for any $r\in E\subset [r_i/4,r_i/2].$ Besides that, note that
$I_i\ne\infty,$ because
$$I_i\leqslant \log\frac{r_2}{r_1}<\infty\,,\quad i=1,2, \ldots, p\,.$$
Now, we put
$$\eta_i(r)=\begin{cases}
\frac{1}{I_ir\widetilde{q}_{y_i}^{\frac{1}{n-1}}(r)}\,,&
r\in [r_i/4,r_i/2]\,,\\
0,& r\not\in [r_i/4,r_i/2]\,.
\end{cases}$$
Observe that, a function~$\eta_i$ satisfies the
condition~$\int\limits_{r_i/4}^{r_i/2}\eta_i(r)\,dr=1,$ therefore it
can be substituted into the right side of the
inequality~(\ref{eq2*A}) with the corresponding values $f,$ $r_1$
and $r_2.$ We will have that
\begin{equation}\label{eq7B}
M(\Gamma_{im})\leqslant \int\limits_{A(y_i, r_i/4, r_i/2)}
\widetilde{Q}(y)\,\eta^n_i(|y-y_i|)\,dm(y)\,.\end{equation}
On the other hand, by the Fubini theorem we obtain that
\begin{eqnarray}\label{eq7C}\int\limits_{A(y_i, r_i/4, r_i/2)}
\widetilde{Q}(y)\,\eta^n_i(|y-y_i|)\,dm(y)\nonumber=\\
= \int\limits_{r_i/4}^{r_i/2}\int\limits_{S(y_i,
r)}Q(y)\eta^n_i(|y-y_i|)\,d\mathcal{A}\,dr\,=\\
=\frac{\omega_{n-1}}{I_i^n}\int\limits_{r_i/4}^{r_i/2}r^{n-1}
\widetilde{q}_{y_i}(r)\cdot
\frac{dr}{r^n\widetilde{q}^{\frac{n}{n-1}}_{y_i}(r)}=\frac{\omega_{n-1}}{I_i^{n-1}}\,,\nonumber
\end{eqnarray}
where $\omega_{n-1}$ is the area of the unit sphere in ${\Bbb R}^n.$
Now, by~(\ref{eq7B}) and~(\ref{eq7C}) we obtain that
$$M(\Gamma_{im})\leqslant \frac{\omega_{n-1}}{I_i^{n-1}}\,,$$
whence from~(\ref{eq5})
\begin{equation}\label{eq7D}
M(\Gamma_m)\leqslant \sum\limits_{i=1}^pM(\Gamma_{im})\leqslant
\sum\limits_{i=1}^p\frac{\omega_{n-1}}{I_i^{n-1}}:=C_0\,, \quad
m=1,2,\ldots\,.
\end{equation}
Further reasoning is related to the ``weak flatness'' of the inner
points of the domain $D.$ Notice, that $d(P_m)\geqslant |x_m-
z^1_m|\geqslant (1/2)\cdot |x_0-p_1|>0$ and $d(Q_m)\geqslant |x_0-
z^2_m|\geqslant(1/2)\cdot |x_0-p_2|>0$ for sufficiently large $m\in
{\Bbb N},$ in addition,
$$
d(P_m, Q_m)\leqslant |x_m-x_0|\rightarrow 0, \quad m\rightarrow
\infty.
 $$
Now, by Lemma~\ref{lem2C}
\begin{equation}\label{eq1A}
M(\Gamma_m)=M(\Gamma(P_m, Q_m, D))\rightarrow\infty\,,\quad
m\rightarrow\infty,
\end{equation}
which contradicts the relation~(\ref{eq7D}). The resulting
contradiction indicates that the assumption in~(\ref{eq13A}) is
wrong, which completes the {\bf Case 1.1}.

\medskip
As we will see further, the consideration of the two remaining cases
is not significantly different from case~1.1.

\medskip
{\bf Case 1.2: exactly one of the points, $\overline{x_1}$ or
$\overline{x_2}$ coincides with either $a$ or $b.$} Without loss of
generality, we may assume that $a=\overline{x_1}.$ Now, $
\overline{x_1}\ne b\ne \overline{x_2}.$

\medskip
We join the points $b$ and $\overline{x_2}$ by a path $\gamma_2
\colon[1/2, 1] \rightarrow {\Bbb R}^n$ which does not intersect $a.$
Let $R_1>0$ be such that $\overline{B(\overline{x_1}, R_1)}\cap
|\gamma_2|=\varnothing,$ and let $R_2>0$ be such that
$$
(\overline{B(\overline{x_1},R_1)})\cap\overline{B(\overline{x_2},R_2)}=\varnothing.
 $$
We may assume that $f_m(x_m)\in B(\overline{x_1}, R_1)$ and
$f_m(x_0)\in B(\overline{x_2},R_2)$ for any $m\geqslant 1.$ Join the
points $f_m(x_m)$ and $\overline{x_1}=a$ by a path $\alpha_m\colon
[0, 1] \rightarrow B(\overline {x_1}, R_1),$ and join the point
$f_m(x_0)$ with the point $\overline {x_2}$ by a path
$\beta^{\,*}_m\colon[0, 1/2]\rightarrow B(\overline{x_2}, R_2)$ (see
Figure~\ref{figure3}).
 \begin{figure}
\centerline{\includegraphics[scale=0.5]{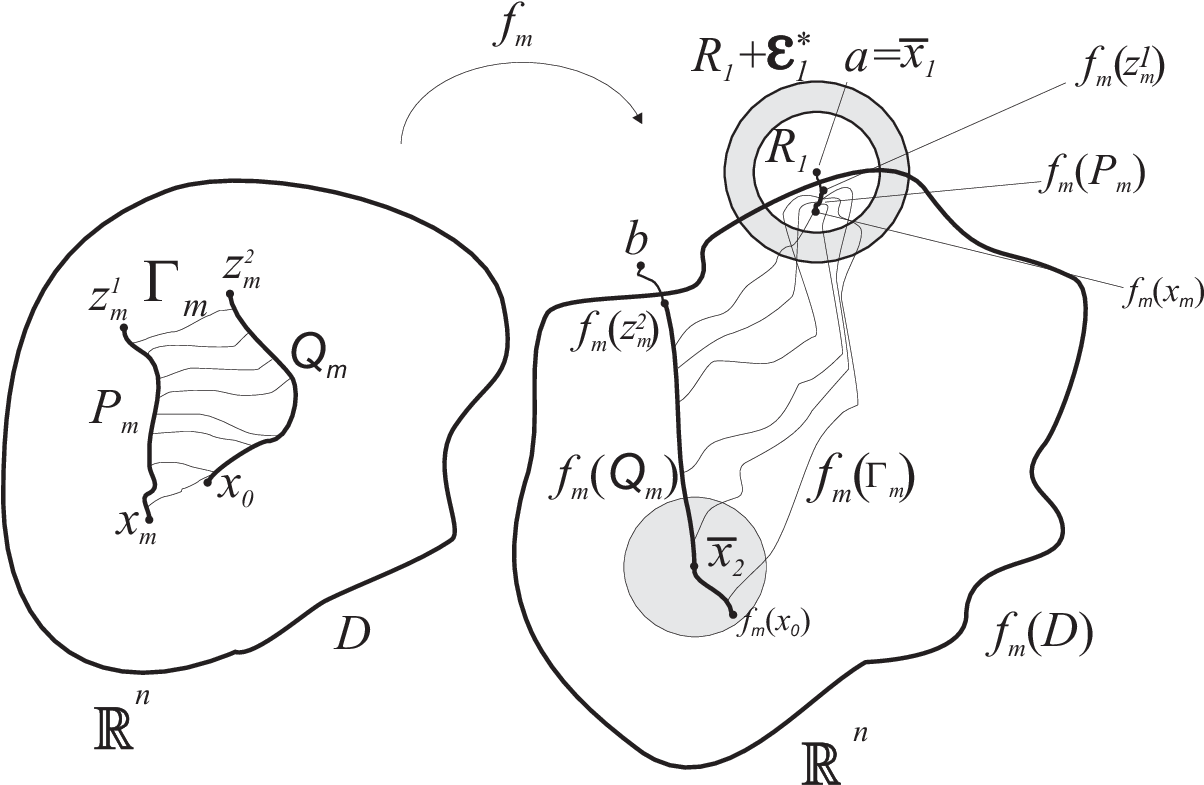}} \caption{To
the proof of Theorem~\ref{th3}, Case~1.2}\label{figure3}
 \end{figure} Set
$$
\beta_m(t)=\quad\left\{
\begin{array}{rr}
\beta^*_m(t), & t\in [0, 1/2],\\
\gamma_2(t), & t\in [1/2, 1].\end{array} \right.
 $$
By the construction, the sets
 $$
A_1:=\overline{B(\overline{x_1}, R_1)}\,,\quad A_2:=|\gamma_2|\cup
\overline{B(\overline{x_2}, R_2)}
 $$
do not intersect, in particular, there is $\varepsilon^{\,*}_1>0$
such that
\begin{equation}\label{eq1JA}
\overline{B(\overline{x_1}, R_1+\varepsilon^{\,*}_1)}\cap
A_2=\varnothing.
 \end{equation}
Let $\alpha^0_m:[0, c_1)\rightarrow D$ and $\beta^0_m:[0,
c_2)\rightarrow D$ be maximal $f_m$-liftings of paths $\alpha_m$ and
$\beta_m$ starting at points $x_m$ and $x_0,$ respectively (they
exist by Proposition~\ref{pr3}). By the same Proposition,
$\alpha^0_m(t_k)\rightarrow
\partial D$ and $\beta^0_m(t^{\,\prime}_k)\rightarrow
\partial D$ for some sequences $t_k\rightarrow c_1-0$
and $t^{\,\prime}_k\rightarrow c_2-0,$ $k\rightarrow\infty.$ Then
there are sequences of points $z^1_m\in |\alpha_m^0|$ and $z^2_m\in
|\beta_m^0|$ such that $d(z^1_m,
\partial D)<1/m$ and $d(z^2_m,\partial D)<1/m.$
Since $\overline{D}$ is a compactum, we may consider that
$z^1_m\rightarrow p_1\in
\partial D$ and $z^2_m\rightarrow p_2\in
\partial D$ as $m\rightarrow\infty.$
Let $P_m$ be the part of the locus of the path $\alpha^0_m$ in $D,$
located between the points $x_m$ and $z^1_m,$ and $Q_m$ the part of
the locus of the path $\beta^0_m$ in $D,$ located between the points
$x_0$ and $z^2_m.$ By the construction, $f_m(P_m)\subset A_1$ and
$f_m(Q_m)\subset A_2.$ Put $\Gamma_m:=\Gamma(P_m, Q_m, D).$ Then,
by~\cite[Theorem~1.I.5.46]{Ku}, we obtain that
\begin{equation}\label{eq5A}
\Gamma_m>\Gamma_0\,,
 \end{equation}
where $\Gamma_{0}:=\Gamma_{f_m}(a, R_1, R_1+\varepsilon^{\,*}_1).$
Let $\widetilde{Q}(y)=\max\{Q(y), 1\}$ and let
$$\widetilde{q}_{a}(r)=\frac{1}{\omega_{n-1}r^{n-1}}\int\limits_{S(a,
r)}\widetilde{Q}(y)\,d\mathcal{A}\,.$$
Similarly to the case~1.1 we may show that $\widetilde{q}_{a}(r)\ne
\infty$ for any $r\in E\subset [R_1, R_1+\varepsilon^{\,*}_1].$
Set
$$I_0=I(a, R_1, R_1+\varepsilon^{\,*}_1)=
\int\limits_{R_1}^{R_1+\varepsilon^{\,*}_1}\
\frac{dr}{r\widetilde{q}_{a}^{\frac{1}{n-1}}(r)}\,.$$
As in the case~1.1, we may show that $0<I_0<\infty.$ Now, we put
$$\eta_0(r)=\begin{cases}
\frac{1}{I_0r\widetilde{q}_{a}^{\frac{1}{n-1}}(r)}\,,&
r\in [R_1, R_1+\varepsilon^{\,*}_1]\,,\\
0,& r\not\in [R_1, R_1+\varepsilon^{\,*}_1]\,.
\end{cases}$$
Observe that, a function~$\eta_0$ satisfies the
condition~$\int\limits_{R_1}^{R_1+\varepsilon^{\,*}_1}\eta_0(r)\,dr=1,$
therefore it can be substituted into the right side of the
inequality~(\ref{eq2*A}) with the corresponding values $f,$ $r_1$
and~$r_2.$ Arguing similarly to~(\ref{eq7C}) and
applying~(\ref{eq5A}), we will have that
\begin{equation}\label{eq7BA}
M(\Gamma_m)\leqslant M(\Gamma_0)\leqslant \int\limits_{A(a, R_1,
R_1+\varepsilon^{\,*}_1)}
\widetilde{Q}(y)\,\eta^n_0(|y-a|)\,dm(y)=\frac{\omega_{n-1}}{I_0^{n-1}}\,,\end{equation}
where $\omega_{n-1}$ is the area of the unit sphere in ${\Bbb R}^n.$
On the other hand, similarly to~(\ref{eq1A}),
$$
M(\Gamma_m)=M(\Gamma(P_m, Q_m, D))\rightarrow\infty\,,\quad
m\rightarrow\infty,
 $$
which contradicts the relation~(\ref{eq7BA}). This completes the
consideration of {\bf Case 1.2}.

\medskip
{\bf Case 1.3: both points $\overline{x_1}$ and $\overline{x_2}$
coincide with $a$ and $b.$} Let $a=\overline{x_1}$ and
$b=\overline{x_2}.$ Let $R_1>0$ and $R_2>0$ be such that
\begin{equation}\label{eq2B}
(\overline{B(\overline{x_1},R_1)})\cap\overline{B(\overline{x_2},R_2)}=\varnothing\,.
\end{equation}
We may assume that $f_m(x_m)\in B(\overline{x_1}, R_1)$ and
$f_m(x_0)\in B(\overline{x_2},R_2)$ for any $m\geqslant 1.$ Join the
points $f_m(x_m)$ and $\overline{x_1}=a$ by a path $\alpha_m\colon
[0, 1] \rightarrow B(\overline {x_1}, R_1),$ and join the point
$f_m(x_0)$ with the point $\overline {x_2}$ by a path
$\beta_m\colon[0, 1]\rightarrow B(\overline{x_2}, R_2)$ (see
Figure~\ref{figure4}).
 \begin{figure}
\centerline{\includegraphics[scale=0.5]{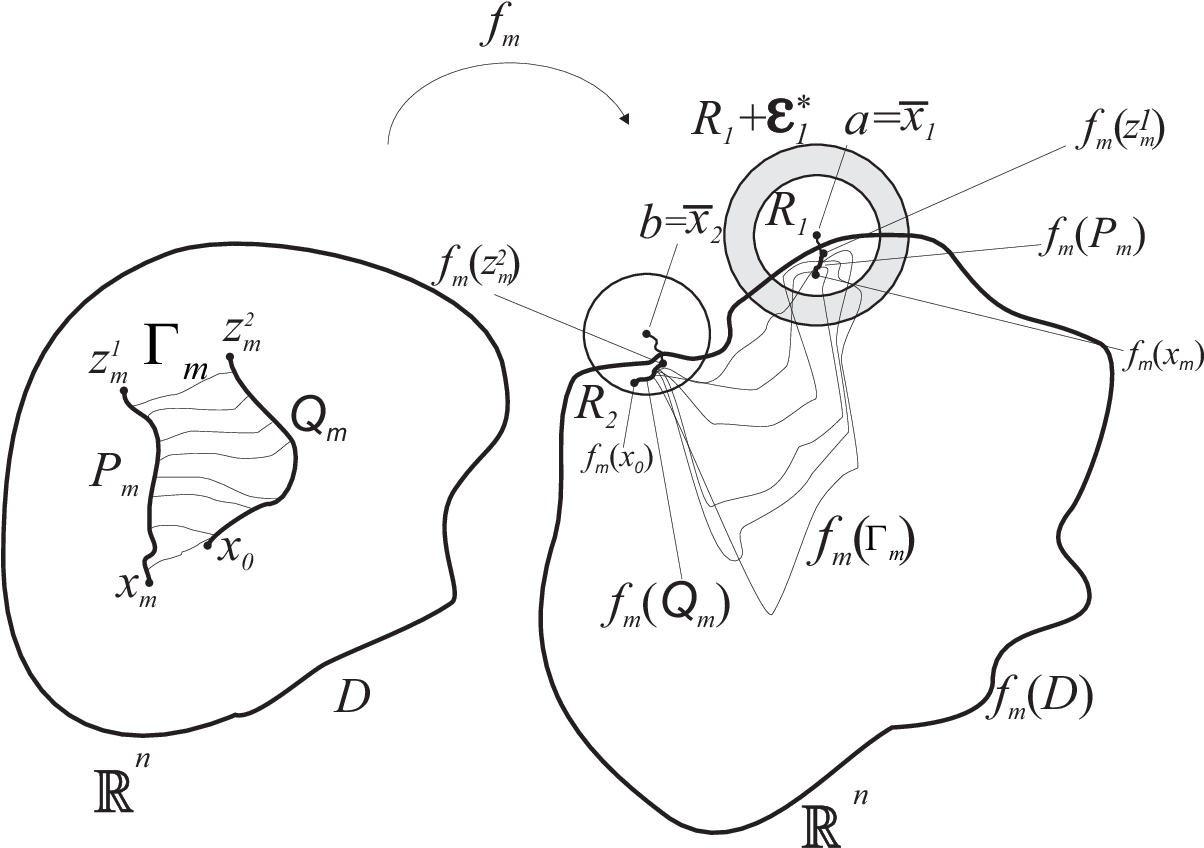}} \caption{To
the proof of Theorem~\ref{th3}, Case~1.3}\label{figure4}
 \end{figure}
Due to~(\ref{eq2B}), there is $\varepsilon^{\,*}_1>0$ such that
$$
\overline{B(\overline{x_1}, R_1+\varepsilon^{\,*}_1)}\cap
A_2=\varnothing.
$$
Let $\alpha^0_m:[0, c_1)\rightarrow D$ and $\beta^0_m:[0,
c_2)\rightarrow D$ be maximal $f_m$-liftings of paths $\alpha_m$ and
$\beta_m$ starting at points $x_m$ and $x_0,$ respectively (they
exist by Proposition~\ref{pr3}). By the same Proposition,
$\alpha^0_m(t_k)\rightarrow
\partial D$ and $\beta^0_m(t^{\,\prime}_k)\rightarrow
\partial D$ for some sequences $t_k\rightarrow c_1-0$
and $t^{\,\prime}_k\rightarrow c_2-0,$ $k\rightarrow\infty.$ Then
there are sequences of points $z^1_m\in |\alpha_m^0|$ and $z^2_m\in
|\beta_m^0|$ such that $d(z^1_m,
\partial D)<1/m$ and $d(z^2_m,\partial D)<1/m.$
Since $\overline{D}$ is a compactum, we may consider that
$z^1_m\rightarrow p_1\in
\partial D$ and $z^2_m\rightarrow p_2\in
\partial D$ as $m\rightarrow\infty.$
Let $P_m$ be the part of the locus of the path $\alpha^0_m$ in
${\Bbb R}^n,$ located between the points $x_m$ and $z^1_m,$ and
$Q_m$ the part of the locus of the path $\beta^0_m$ in ${\Bbb M}^n,$
located between the points $x_0$ and $z^2_m.$ By the construction,
$f_m(P_m)\subset B(\overline{x_1}, R_1)$ and $f_m(Q_m)\subset {\Bbb
R}^n\setminus  B(\overline{x_1}, R_1+\varepsilon^{\,*}_1)$ for some
$\varepsilon^{\,*}_1>0.$ Put $\Gamma_m:=\Gamma(P_m, Q_m, D).$ Then,
by~\cite[Theorem~1.I.5.46]{Ku}, we obtain that
\begin{equation}\label{eq5B}
\Gamma_m>\Gamma_0\,,
 \end{equation}
where $\Gamma_{0}:=\Gamma_{f_m}(a, R_1, R_1+\varepsilon^{\,*}_1).$
Let $\widetilde{Q}(y)=\max\{Q(y), 1\}$ and let
$$\widetilde{q}_{a}(r)=\frac{1}{\omega_{n-1}r^{n-1}}\int\limits_{S(a,
r)}\widetilde{Q}(y)\,d\mathcal{A}\,.$$
Similarly to the case~1.1 we may show that $\widetilde{q}_{a}(r)\ne
\infty$ for any $r\in E\subset [R_1, R_1+\varepsilon^{\,*}_1],$
where $E$ is some set of positive linear measure which exists by the
assumption.
Set
$$I_0=I_0(a, R_1, R_1+\varepsilon^{\,*}_1)=
\int\limits_{R_1}^{R_1+\varepsilon^{\,*}_1}\
\frac{dr}{r\widetilde{q}_{a}^{\frac{1}{n-1}}(r)}\,.$$
As in the case~1.1, we may show that $0<I_0<\infty.$ Now, we put
$$\eta_0(r)=\begin{cases}
\frac{1}{I_0r\widetilde{q}_{a}^{\frac{1}{n-1}}(r)}\,,&
r\in [R_1, R_1+\varepsilon^{\,*}_1]\,,\\
0,& r\not\in [R_1, R_1+\varepsilon^{\,*}_1]\,.
\end{cases}$$
Observe that, a function~$\eta_0$ satisfies the
condition~$\int\limits_{R_1}^{R_1+\varepsilon^{\,*}_1}\eta_0(r)\,dr=1,$
therefore it can be substituted into the right side of the
inequality~(\ref{eq2*A}) with the corresponding values $f,$ $r_1$
and $r_2.$ Arguing similarly to~(\ref{eq7C}) and
applying~(\ref{eq5B}), we will have that
\begin{equation}\label{eq7BB}
M(\Gamma_m)\leqslant \int\limits_{A(a, R_1,
R_1+\varepsilon^{\,*}_1)}
\widetilde{Q}(y)\,\eta^n_0(|y-a|)\,dm(y)=\frac{\omega_{n-1}}{I_0^{n-1}}\,,\end{equation}
where $\omega_{n-1}$ is the area of the unit sphere in ${\Bbb R}^n.$
On the other hand, similarly to~(\ref{eq1A}),
$$
M(\Gamma_m)=M(\Gamma(P_m, Q_m, D))\rightarrow\infty\,,\quad
m\rightarrow\infty,
 $$
which contradicts the relation~(\ref{eq7BB}). This completes the
consideration of {\bf Case 1.3}, and therefore, in general, case~1.

{\bf Case~2: One of the points $\overline{x_1}$ or $\overline{x_2}$
is finite, and one is equal to infinity.} Let, for example,
$\overline{x_1}\ne\infty,\,\,\overline{x_2}=\infty.$ Let
$r_m=r_m(t)=(f_m(x_0)-\overline{x_1})t+\overline{x_1},$
$-\infty<t<\infty,$ be a straight line joining the points
$\overline{x_1}$ and $f_m(x_0).$ Join the points $a$ and
$\overline{x_1}$ (even if $a=\overline{x_1}$) by the path $\gamma_1
\colon [1/2, 1] \rightarrow {\Bbb R}^n.$ Let $R_1>0.$ We may assume
that $f_m(x_m)\in B(\overline{x_1}, R_1)$ and $f_m(x_0)\in {\Bbb
R}^n\setminus B(\overline{x_1}, R_1)$ (see Figure~\ref{figure5}).
 \begin{figure}
\centerline{\includegraphics[scale=0.5]{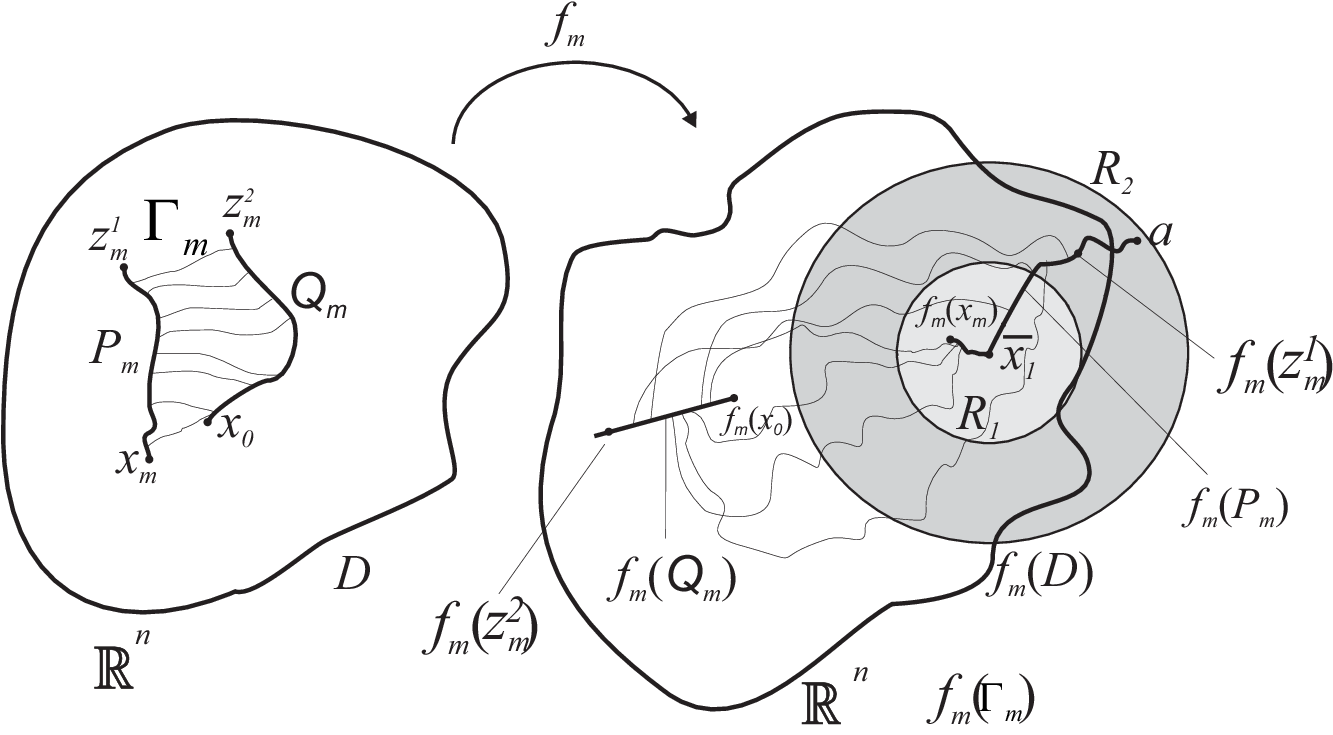}} \caption{To
the proof of Theorem~\ref{th3}, Case~2}\label{figure5}
 \end{figure}
Join the points $f_m(x_m)$ and $\overline{x_1}$ by a path
$\alpha^{\,*}_m\colon [0, 1/2] \rightarrow B(\overline {x_1}, R_1).$
Set
$$
\alpha_m(t)=\quad\left\{\begin{array}{rr}
\alpha^*_m(t), & t\in [0, 1/2],\\
\gamma_1(t), & t\in [1/2, 1]\end{array} \right.\,,\quad
A_1:=|\gamma_1|\cup \overline{B(\overline{x_1}, R_1)}\,.
$$
Now, we establish that there is $R_2>R_1$ and $m_0\in {\Bbb N}$ such
that
\begin{equation}\label{eq3}
A_1\subset B(\overline{x_1}, R_2)\,,\qquad r_m(t)\in {\Bbb
R}^n\setminus B(\overline{x_1}, R_2)\qquad \forall\,\, t\in
[1,\infty)\,,\qquad m\geqslant m_0\,.
\end{equation}
The first inclusion in~(\ref{eq3}) is obvious, because $A_1$ is a
compactum in ${\Bbb R}^n.$ Regarding the second relation, let $x\in
A_1$ and $t\geqslant 1.$ Now, by the triangle inequality,
\begin{gather}
|x-r_m(t)|=|x-((f_m(x_0)-\overline{x_1})t+\overline{x_1})|\nonumber=\\=
\label{eq4}|(f_m(x_0)-\overline{x_1})t+\overline{x_1}-x|\geqslant
|(f_m(x_0)-\overline{x_1})t|-|x-\overline{x_1}|\geqslant\\
\geqslant |f_m(x_0)|-(|\overline{x_1}|+|x-\overline{x_1}|)\geqslant
|f_m(x_0)|-(|\overline{x_1}|+d(A_1))\,.\nonumber\end{gather}
Since $A_1$ is a compactum, there is $c>0$ such that $|x|\leqslant
c$ for any $x\in A_1.$ Now, by~(\ref{eq4}), for $x\in A_1,$
\begin{gather}\label{eq5C}
|r_m(t)|\geqslant |r_m(t)-x|-|x|\geqslant
|f_m(x_0)|-(|\overline{x_1}|+d(A_1))-c\geqslant R_2>R_1>0
\end{gather}
for sufficiently large $m\in {\Bbb N},$ because $f_m(x_0)\rightarrow
\infty$ as $m\rightarrow\infty.$ The latter proves the second
relation in~(\ref{eq3}). Moreover, due to~(\ref{eq5C}) we may
consider that there exists $\varepsilon^{\,*}_1>0$ such that
$r_m(t)\in {\Bbb R}^n\setminus B(\overline{x_1},
R_2+\varepsilon^{\,*}_1)$ for the same $t$ and $m.$

\medskip
Let $x\in A_1$ and $y\in |r_m(t)|,$ $t\geqslant 1,$ $m\geqslant
m_0.$ Now, by~(\ref{eq3}), due to the remarks mentioned above and by
the triangle inequality we obtain that
\begin{gather*}|x-y|=|x-\overline{x_1}+\overline{x_1}-y|\geqslant
\\ \geqslant |y-\overline{x_1}|-|x-\overline{x_1}|\geqslant
R_2+\varepsilon^{\,*}_1-R_2=\varepsilon^{\,*}_1>0\,.
\end{gather*}
The latter means that
\begin{equation}\label{eq1JB}
d(A_1, |r_m(t)|)\geqslant \varepsilon^{\,*}_1>0\,,\qquad  t\in
[1,\infty)\,,\qquad m\geqslant m_0\,.
 \end{equation}
Let $r_0=r_0(y)>0$ be the number from the conditions of the theorem,
defined for each $y_0\in {\Bbb R}^n.$ Set
$$r_*(y):=\min\{\varepsilon^{\,*}_1, r_0(y)\}\,.$$
Cover the set $A_1$ with balls $B(y, r_*/4),$ $y\in A_1.$ Note that
$|\gamma_1 |$ is a compact set in ${\Bbb R}^n$ as a continuous image
of the compact set $[1/2, 1]$ under the mapping $\gamma_1.$ Then, by
the Heine-Borel-Lebesgue lemma, there is a finite subcover
$\bigcup\limits_{i=1}^pB(y_i, r_*/4)$ of the set $A_1.$ In other
words,
\begin{equation}\label{eq2C}
A_1\subset \bigcup\limits_{i=1}^pB(y_i,r_i/4)\,,\qquad 1\leqslant
p<\infty\,,
 \end{equation}
where $r_i$ denotes $r_*(y_i)$ for any $y_i\in {\Bbb R}^n.$

\medskip
Let $\alpha^0_m:[0, c_1)\rightarrow D$ and $r^0_m:[1,
c_2)\rightarrow D$ be maximal $f_m$-liftings of paths $\alpha_m$ and
$r_m$ starting at points $x_m$ and $x_0,$ respectively. Such maximal
liftings exist by Proposition~\ref{pr3}. By the same Proposition,
$\alpha^0_m(t_k)\rightarrow
\partial D$ and $r^0_m(t^{\,\prime}_k)\rightarrow
\partial D$ for some sequences $t_k\rightarrow c_1-0$
and $t^{\,\prime}_k\rightarrow c_2-0,$ $k\rightarrow\infty.$ Then
there are sequences of points $z^1_m\in |\alpha_m^0|$ and $z^2_m\in
|r_m^0|$ such that $d(z^1_m,
\partial D)<1/m$ and $d(z^2_m,\partial D)<1/m.$
Since $\overline{D}$ is a compactum, we may consider that
$z^1_m\rightarrow p_1\in
\partial D$ and $z^2_m\rightarrow p_2\in
\partial D$ as $m\rightarrow\infty.$
Let $P_m$ be the part of the locus of the path $\alpha^0_m$ in $D,$
located between the points $x_m$ and $z^1_m,$ and $Q_m$ the part of
the locus of the path $r^0_m$ in $D,$ located between the points
$x_0$ and $z^2_m.$ By the construction, $f_m(P_m)\subset A_1$ and
$f_m(Q_m)\subset |r_m(t)|_{[1, \infty)}|.$ Put
$\Gamma_m:=\Gamma(P_m, Q_m, D).$ Then, by~(\ref{eq1JB}) and
(\ref{eq2C}), and by~\cite[Theorem~1.I.5.46]{Ku}, we obtain that
\begin{equation}\label{eq5D}
\Gamma_m>\bigcup\limits_{i=1}^p\Gamma_{im}\,,
 \end{equation}
where $\Gamma_{im}:=\Gamma_{f_m}(y_i, r_i/4, r_i/2).$ Set
$\widetilde{Q}(y)=\max\{Q(y), 1\}$ and
$$\widetilde{q}_{y_i}(r)=\frac{1}{\omega_{n-1}r^{n-1}}\int\limits_{S(y_i,
r)}\widetilde{Q}(y)\,d\mathcal{A}\,,$$
where $\omega_{n-1}$ denotes the area of the unit sphere in ${\Bbb
R}^n,$ and $d\mathcal{A}=d\mathcal{H}^{n-1}$ is the area element on
$S(y_i, r).$ Also we set
$$I_i=I_i(y_i,r_i/4,r_i/2)=\int\limits_{r_i/4}^{r_i/2}\
\frac{dr}{r\widetilde{q}_{y_i}^{\frac{1}{n-1}}(r)}\,.$$ Arguing
similarly to the case~1) considered above and taking into account
the relation~(\ref{eq5D}), we obtain that
\begin{equation}\label{eq7E}
M(\Gamma_m)\leqslant \sum\limits_{i=1}^pM(\Gamma_{im})\leqslant
\sum\limits_{i=1}^p\frac{\omega_{n-1}}{I_i^{n-1}}:=C_0\,, \quad
m=1,2,\ldots\,.
\end{equation}
Again, arguing similarly to the case~1) mentioned above, we obtain
that
\begin{equation*}
M(\Gamma_m)=M(\Gamma(P_m, Q_m, D))\rightarrow\infty\,,\quad
m\rightarrow\infty,
\end{equation*}
which contradicts the relation~(\ref{eq7E}). The resulting
contradiction indicates that the assumption in~(\ref{eq13A}) is
wrong, which completes the {\bf Case 2}. Theorem is totally proved.
\end{proof}~$\Box$

\medskip
\begin{corollary}\label{cor3}
{\it\, The statement of Theorem~\ref{th3} is true if the condition
on the function~$Q$ mentioned in this theorem is replaced by a
simpler one: $Q\in L_{\rm loc}^{1}({\Bbb R}^n).$}
\end{corollary}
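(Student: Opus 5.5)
The plan is to reduce Corollary~\ref{cor3} directly to Theorem~\ref{th3}. Under the hypothesis $Q\in L^1_{\rm loc}({\Bbb R}^n)$ we only need to check the assumption of Theorem~\ref{th3}. That assumption says: for each $y_0\in{\Bbb R}^n$ there is $r_0(y_0)>0$ such that, for all $0<r_1<r_2<r_0$, the set of radii $r\in[r_1,r_2]$ for which $Q$ is $\mathcal{H}^{n-1}$-integrable over $S(y_0,r)$ has positive linear measure. Once this is verified, Theorem~\ref{th3} yields equicontinuity of $\frak{F}_{Q,a,b}(D)$ at every $x_0\in D$, and nothing else is needed.

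To verify the assumption, fix $y_0\in{\Bbb R}^n$ and take, say, $r_0=1$. Since $Q$ is locally integrable, $Q\in L^1(B(y_0,r_0))$. By the Fubini theorem in polar coordinates (equivalently, the coarea formula for $|y-y_0|$), applied as in (\ref{eq7C}), we get
$$\int\limits_{A(y_0,r_1,r_2)}Q(y)\,dm(y)=\int\limits_{r_1}^{r_2}\int\limits_{S(y_0,r)}Q(y)\,d\mathcal{H}^{n-1}\,dr<\infty.$$
Hence the inner integral $\int_{S(y_0,r)}Q\,d\mathcal{H}^{n-1}$ is finite for almost every $r\in[r_1,r_2]$. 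We may therefore take $E$ to be $[r_1,r_2]$ minus a null set, which has linear measure $r_2-r_1>0$. Measurability of $Q$ restricted to almost every sphere is also part of the Fubini statement, so integrability of $Q$ over $S(y_0,r)$ holds for every $r\in E$.

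I do not expect any real obstacle. The only points needing care are that $Q\geqslant 0$ is merely Lebesgue measurable, so Tonelli's theorem in spherical coordinates applies without any integrability assumption, and that $\mathcal{H}^{n-1}$ on $S(y_0,r)$ agrees with the surface measure $r^{n-1}d\sigma$ used in the polar decomposition. Both are standard facts.
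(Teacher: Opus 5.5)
Your proposal is correct and follows essentially the same route as the paper. Both apply Fubini in spherical coordinates to the annulus integral of $Q$, conclude that $\int_{S(y_0,r)}Q\,d\mathcal{H}^{n-1}<\infty$ for almost every $r\in[r_1,r_2]$, take $E$ of full (hence positive) measure, and then invoke Theorem~\ref{th3}. Your remarks on Tonelli for nonnegative $Q$ and on measurability of a.e.\ spherical section make explicit details the paper leaves implicit.
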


\medskip
\begin{proof}
By the Fubini theorem (see, e.g., \cite[Theorem~8.1.III]{Sa}) we
obtain that
$$\int\limits_{r_1<|x-x_0|<r_2}Q(x)\,dm(x)=\int\limits_{r_1}^{r_2}
\int\limits_{S(x_0, r)}Q(x)\,d\mathcal{H}^{n-1}(x)dr<\infty\,.$$
This means the fulfillment of the condition of the integrability of
the function $Q$ on the spheres with respect to any subset $E_1$ in
$[r_1, r_2].$
\end{proof}

\section{Proof of Theorem~\ref{th2}}

We will now demonstrate the validity of Theorem~\ref{th2}, i.e., we
will verify that even if a family of mappings omits only one point,
this is sufficient for its equicontinuity. We will prove the theorem
by contradiction.

\medskip
Then there is $\varepsilon_0>0,$ for which the following condition
is true: for any $m\in{\Bbb N}$ there is $x_m \in D$ with
$|x_m-x_0|<1/m,$ and a mapping $f_m \in \frak{F}_{Q, a}(D)$ such
that
\begin{equation}\label{eq13B}
h(f_m(x_m), f_m(x_0))\geqslant \varepsilon_0.
\end{equation}
We may consider that $D$ is bounded. Without loss of generality, we
may assume that $\overline{B(x_0, 1)}\subset D.$ Observe that there
exists $x^{\,\prime}_1\in B(x_0, 1)$ and $m_1>1$ such that
$|f_{m_1}(x^{\,\prime}_1)|>1.$ Otherwise, $|f_m(x)|\leqslant 1$ for
any $B(x_0, 1)$ and any $m\in {\Bbb N}.$ Thus, by Theorem~\ref{th3}
the family $f_m$ is equicontinuous in $B(x_0, 1),$ but this
contradicts with~(\ref{eq13B}). Follow, observe that there exists
$x^{\,\prime}_2\in B(x_0, 1/2)$ and $m_2>m_1$ such that
$|f_{m_2}(x^{\,\prime}_2)|>2.$ Otherwise, $|f_m(x)|\leqslant 2$ for
any $B(x_0, 1/2)$ and any $m\in {\Bbb N},$ $m>m_1.$ Thus, by
Theorem~\ref{th3} the family $f_m$ is equicontinuous in $B(x_0,
1/2),$ but this contradicts with~(\ref{eq13B}). And so on. In
general, observe that there exists $x^{\,\prime}_k\in B(x_0, 1/k)$
and $m_k>m_{k-1}$ such that $|f_{m_{k}}(x^{\,\prime}_k)|>k.$
Otherwise, $|f_m(x)|\leqslant k$ for any $B(x_0, 1/k)$ and any $m\in
{\Bbb N},$ $m_k>m_{k-1}.$ Thus, by Theorem~\ref{th3} the family
$f_m$ is equicontinuous in $B(x_0, 1/k),$ but this contradicts
with~(\ref{eq13B}).

\medskip
So, we have constructed the sequences $x^{\,\prime}_k$ and $f_{m_k}$
$k=1,2,\ldots ,$ such that $x^{\,\prime}_k\rightarrow x_0$ as
$k\rightarrow\infty$ and $f_{m_k}(x^{\,\prime}_k)\rightarrow\infty$
as $k\rightarrow\infty.$ On the other hand, the
relation~(\ref{eq13B}) implies that
\begin{equation}\label{eq14} h(f_{m_k}(x_{m_k}),
f_{m_k}(x_0))\geqslant \varepsilon_0\,, k=1,2,\ldots \,.
\end{equation}
Due to the compactness of $\overline{{\Bbb R}^n},$ we may consider
that sequences $f_m(x_m)$ and $f_m(x_0)$ converge to
$\overline{x_1}$ and $\overline{x_2}\in \overline{{\Bbb R}^n}$ as
$m\rightarrow\infty,$ respectively. The relation~(\ref{eq14})
implies that at least one of the sequence $f_{m_k}(x_{m_k})$ or
$f_{m_k}(x_0)$ does not converge to $\infty.$ Let, for instance,
$f_{m_k}(x_{m_k})\rightarrow\overline{x_1}$ as $k\rightarrow\infty,$
$\overline{x_1}\ne\infty.$ Referring to redesignations, if
necessary, we will henceforth assume that
$f_{m}(x^{\,\prime}_m)\rightarrow\infty$ as $m\rightarrow\infty$ and
$f_{m}(x_{m})\rightarrow\overline{x_1}$ as $m\rightarrow\infty,$
$\overline{x_1}\ne\infty.$

\medskip
The further proof almost completely repeats the scheme presented in
the proof of case~2 of Theorem~\ref{th2} (we will demonstrate this).
Let
$r_m=r_m(t)=(f_m(x^{\,\prime}_m)-\overline{x_1})t+\overline{x_1},$
$-\infty<t<\infty,$ be a straight line joining the points
$\overline{x_1}$ and $f_m(x^{\,\prime}_m).$ Join the points $a$ and
$\overline{x_1}$ (even if $a=\overline{x_1}$) by a path $\gamma_1
\colon [1/2, 1] \rightarrow {\Bbb R}^n.$ Let $R_1>0.$ We may assume
that $f_m(x_m)\in B(\overline{x_1}, R_1)$ and
$f_m(x^{\,\prime}_m)\in {\Bbb R}^n\setminus B(\overline{x_1}, R_1).$
Join the points $f_m(x_m)$ and $\overline{x_1}$ by a path
$\alpha^{\,*}_m\colon [0, 1/2] \rightarrow B(\overline {x_1}, R_1).$
We set
$$
\alpha_m(t)=\quad\left\{\begin{array}{rr}
\alpha^*_m(t), & t\in [0, 1/2],\\
\gamma_1(t), & t\in [1/2, 1]\end{array} \right.\,,\quad
A_1:=|\gamma_1|\cup \overline{B(\overline{x_1}, R_1)}\,.
$$
Arguing similarly to the proof of the relation~(\ref{eq3}), we may
prove that, for some $R_2>R_1>0$
\begin{equation}\label{eq3A}
A_1\subset B(\overline{x_1}, R_2)\,,\qquad r_m(t)\in {\Bbb
R}^n\setminus B(\overline{x_1}, R_2)\qquad \forall\,\, t\in
[1,\infty)\,,\qquad m\geqslant m_0\,.
\end{equation}
Moreover, we may consider that there exists $\varepsilon^{\,*}_1>0$
such that $r_m(t)\in {\Bbb R}^n\setminus B(\overline{x_1},
R_2+\varepsilon^{\,*}_1)$ for the same $t$ and $m.$ Now, similarly
to~(\ref{eq1JB})
\begin{equation}\label{eq1JC}
d(A_1, |r_m(t)|)\geqslant \varepsilon^{\,*}_1>0\,,\qquad  t\in
[1,\infty)\,,\qquad m\geqslant m_0\,.
 \end{equation}
Let $r_0=r_0(y)>0$ be the number from the conditions of the theorem,
defined for each $y_0\in {\Bbb R}^n.$ Set
$$r_*(y):=\min\{\varepsilon^{\,*}_1, r_0(y)\}\,.$$
Cover the set $A_1$ with balls $B(y, r_*/4),$ $y\in A_1.$ Note that
$|\gamma_1 |$ is a compact set in ${\Bbb R}^n$ as a continuous image
of the compact set $[1/2, 1]$ under the mapping $\gamma_1.$ Then, by
the Heine-Borel-Lebesgue lemma, there is a finite subcover
$\bigcup\limits_{i=1}^pB(y_i, r_*/4)$ of the set $A_1.$ In other
words,
\begin{equation}\label{eq2D}
A_1\subset \bigcup\limits_{i=1}^pB(y_i,r_i/4)\,,\qquad 1\leqslant
p<\infty\,,
 \end{equation}
where $r_i$ denotes $r_*(y_i)$ for any $y_i\in {\Bbb R}^n.$

\medskip
Let $\alpha^0_m:[0, c_1)\rightarrow D$ and $r^0_m:[1,
c_2)\rightarrow D$ be maximal $f_m$-liftings of paths $\alpha_m$ and
$r_m$ starting at points $x_m$ and $x_0,$ respectively. Such maximal
liftings exist by Proposition~\ref{pr3}. By the same Proposition,
$\alpha^0_m(t_k)\rightarrow
\partial D$ and $\beta^0_m(t^{\,\prime}_k)\rightarrow
\partial D$ for some sequences $t_k\rightarrow c_1-0$
and $t^{\,\prime}_k\rightarrow c_2-0,$ $k\rightarrow\infty.$ Then
there are sequences of points $z^1_m\in |\alpha_m^0|$ and $z^2_m\in
|r_m^0|$ such that $d(z^1_m,
\partial D)<1/m$ and $d(z^2_m,\partial D)<1/m.$
Since $\overline{D}$ is a compactum, we may consider that
$z^1_m\rightarrow p_1\in
\partial D$ and $z^2_m\rightarrow p_2\in
\partial D$ as $m\rightarrow\infty.$
Let $P_m$ be the part of the locus of the path $\alpha^0_m$ in $D,$
located between the points $x_m$ and $z^1_m,$ and $Q_m$ the part of
the locus of the path $r^0_m$ in $D,$ located between the points
$x_0$ and $z^2_m.$ By the construction, $f_m(P_m)\subset A_1$ and
$f_m(Q_m)\subset |r_m(t)|_{[1, \infty)}|.$ Put
$\Gamma_m:=\Gamma(P_m, Q_m, D).$ Then, by~(\ref{eq1JC}) and
(\ref{eq2D}), and by~\cite[Theorem~1.I.5.46]{Ku}, we obtain that
\begin{equation}\label{eq5E}
\Gamma_m>\bigcup\limits_{i=1}^p\Gamma_{im}\,,
 \end{equation}
where $\Gamma_{im}:=\Gamma_{f_m}(y_i, r_i/4, r_i/2).$ Set
$\widetilde{Q}(y)=\max\{Q(y), 1\}$ and
$$\widetilde{q}_{y_i}(r)=\frac{1}{\omega_{n-1}r^{n-1}}\int\limits_{S(y_i,
r)}\widetilde{Q}(y)\,d\mathcal{A}\,,$$
where $\omega_{n-1}$ denotes the area of the unit sphere in ${\Bbb
R}^n,$ and $d\mathcal{A}=d\mathcal{H}^{n-1}$ is the area element on
$S(y_i, r).$ Also we set
$$I_i=I_i(y_i,r_i/4,r_i/2)=\int\limits_{r_i/4}^{r_i/2}\
\frac{dr}{r\widetilde{q}_{y_i}^{\frac{1}{n-1}}(r)}\,.$$ Arguing
similarly to the case~1) considered above and taking into account
the relation~(\ref{eq5E}), we obtain that
\begin{equation}\label{eq7F}
M(\Gamma_m)\leqslant \sum\limits_{i=1}^pM(\Gamma_{im})\leqslant
\sum\limits_{i=1}^p\frac{\omega_{n-1}}{I_i^{n-1}}:=C_0\,, \quad
m=1,2,\ldots\,.
\end{equation}
Again, arguing similarly to the case~1) mentioned above, we obtain
that
\begin{equation*}
M(\Gamma_m)=M(\Gamma(P_m, Q_m, D))\rightarrow\infty\,,\quad
m\rightarrow\infty,
\end{equation*}
which contradicts the relation~(\ref{eq7F}). The resulting
contradiction indicates that the assumption in~(\ref{eq14}) is
wrong. Theorem is proved. ~$\Box$

\medskip
\begin{example}\label{eq1}
Let $f_m(x)=mx\,,$ $m=1,2,\ldots ,$ $f_m:{\Bbb R}^n\rightarrow {\Bbb
R}^n.$ Observe that, $f_m$ satisfies the
relations~(\ref{eq2*A})--(\ref{eqA2}) with $Q\equiv 1,$ see e.g.
Remark~\ref{rem1}. Note that, the family $\{f_m\}_{m=1}^{\infty}$ is
not equicontinuous at the origin, and that $f_m\not\in\frak{F}_{1,
a}({\Bbb R}^n)$ for any $a\in {\Bbb R}^n.$ By the same reason,
$\{f_m\}_{m=1}^{\infty}$ does not satisfy the conclusion of
Theorem~\ref{th2}. On the other hand, $f_m\in\frak{F}_{1, 0}({\Bbb
R}^n\setminus\{0\})$ and $\{f_m\}_{m=1}^{\infty}$ is equicontinuous
in ${\Bbb R}^n\setminus\{0\}$ by Theorem~\ref{th2}. The latter
conclusion may be also obtained directly and without of application
Theorem~\ref{th2}. The given example shows that the conditions on
the class of mappings~$\frak{F}_{Q, a}(D)$ in Theorem~\ref{th2}
cannot be improved in the sense that in the definition of this class
it is impossible, generally speaking, to refuse to omitting at least
one point by each mapping.
\end{example}

\medskip
\begin{remark}\label{rem2}
Let ${\Bbb M}^n$ and ${\Bbb M}^n_*$ are Riemannian manifolds of
dimension $n$ with geodesic distances $d$ and $d_*,$ respectively.
We set
$q_{x_0}(r)=\frac{1}{r^{n-1}}\int\limits_{S(x_0,
r)}Q(x)\,d\mathcal{A},$
where $d\mathcal{A}$ is the area element of $S(x_0, r).$

For domains $D\subset {\Bbb M}^n,$ $D_*\subset {\Bbb M}^n_*,$
$n\geqslant 2,$ and a function $Q\colon{\Bbb M}^n_*\rightarrow [0,
\infty],$ $Q(x)\equiv 0$ for $x\not\in D_*,$ denote by ${\frak
R}_Q(D, D_*)$ the family of all open discrete mappings $f\colon
D\rightarrow {\Bbb M}_*^n,$ $f(D)=D_*,$ for which $f$ satisfies the
condition
\begin{equation*}
M(\Gamma_f(y_0, r_1, r_2))\leqslant \int\limits_{A(y_0,r_1,r_2)\cap
f(D)} Q(y)\cdot \eta^n (d_*(y,y_0))\, dv_*(y)\,,
\end{equation*}
at each point $y_0\in D_*,$ where $\eta: (r_1,r_2)\rightarrow
[0,\infty ]$ is arbitrary Lebesgue measurable function with
$ \int\limits_{r_1}^{r_2}\eta(r)\, dr\geqslant 1\,.$ (Here $dv_*(y)$
is the volume measure in ${\Bbb M}^n_*$). The following result holds
(see \cite[Theorem~3]{Sev$_1$}):

\medskip
{\it\, Assume that, $\overline{D}$ and $\overline{D_*}$ a compact
sets in ${\Bbb M}^n$ and ${\Bbb M}^n_*,$ respectively,
$\overline{D}_*\ne{\Bbb M}^n_*$ and, in addition, ${\Bbb M}^n_*$ is
connected. Suppose also that the following condition is satisfied:
for each point $y_0\in \overline{D_*}$ there is $r_0=r_0(y_0)>0$
such that $q_{y_0}(r)<\infty$ for each $r\in (0, r_0).$ Then the
family ${\frak R}_Q(D, D_*)$ is equicontinuous in $D.$ }
\medskip

\medskip Note that the conditions of Theorem~\ref{th2}
are very similar to result mentioned in Remark~\ref{rem2}, see
above. However, Theorem~\ref{th2} places no restrictions on the
mapped domain other than the family of mappings omitting some point.
In the statement mentioned above, the family of mappings acts onto a
fixed domain with a compact closure. Therefore, the result of
Theorem~\ref{th2} is much more general than above statement (we
mean, in the situation of the Euclidean space).
\end{remark}

\medskip
\begin{remark}
Let us note the key difference between {\bf Theorem~D} and
Theorem~\ref{th2}. Firstly, it consists in the fact that the mapped
domain $D^{\,\prime}$ is fixed and bounded. Secondly, the condition
on the function $Q$ is slightly different, since we require it not
locally, in the neighborhood of the point, but globally: the numbers
$r_1$ and $r_2$ must vary from $0$ to a specific number
$r_0:=\sup\limits_{y\in D^{\,\prime}}|y-y_0|,$ which may be greater,
generally speaking, than the number $r_0$ specified in
Theorem~\ref{th2}. We should clarify that for the method by which
{\bf Theorem~D} was proved this circumstance is fundamental, since
for a more local nature of the change of $r_1$ and $r_2$ the
corresponding methodology does not work.
\end{remark}

\medskip
{\it Proof of Corollary~\ref{cor1}} is similar to the proof of
Corollary~\ref{cor3} and follows by Theorem~\ref{th2}.~$\Box$

\section{Proof of Theorem~\ref{th1}}

We may consider that $D$ is a bounded domain. Let us prove this
assertion by contradiction. Assume that the family $\frak{R}_{Q}(D)$
is not equicontinuous at $x_0.$ Then there is $\varepsilon_0>0,$ for
which the following condition is true: for any $m\in{\Bbb N}$ there
is $x_m \in D$ with $|x_m-x_0|<1/m,$ and a mapping $f_m \in
\frak{F}_{Q, a}(D)$ such that
\begin{equation}\label{eq13C}
h(f_m(x_m), f_m(x_0))\geqslant \varepsilon_0.
\end{equation}
Since $\overline{{\Bbb R}^n}$ is a compact space, we may consider
that sequences $f_m(x_m)$ and $f_m(x_0)$ converge to
$\overline{x_1}$ and $\overline{x_2}\in \overline{{\Bbb R}^n}$ as
$m\rightarrow\infty,$ respectively. By~(\ref{eq13C}) and by the
continuity of the metrics, we obtain that $h(\overline{x_1},
\overline{x_2})\geqslant\varepsilon_0/2.$

\medskip
We may assume that $\overline{x_1}\ne \infty$ and that
$\overline{x_1}=y_0,$ where $y_0$ is the element from the conditions
of the theorem. Let $B(\overline{x_1}, R_1)$ be a ball centered at
$\overline{x_1},$ $R_1>0,$ such that $\overline{x_2}\not \in
B(\overline{x_1}, R_1).$ We may assume that $f_m(x_m)\in
B(\overline{x_1}, R_1)$ and $f_m(x_0)\in {\Bbb R}^n\setminus
B(\overline{x_1}, R_1),$
see Figure~\ref{figure6}.
 \begin{figure}
\centerline{\includegraphics[scale=0.5]{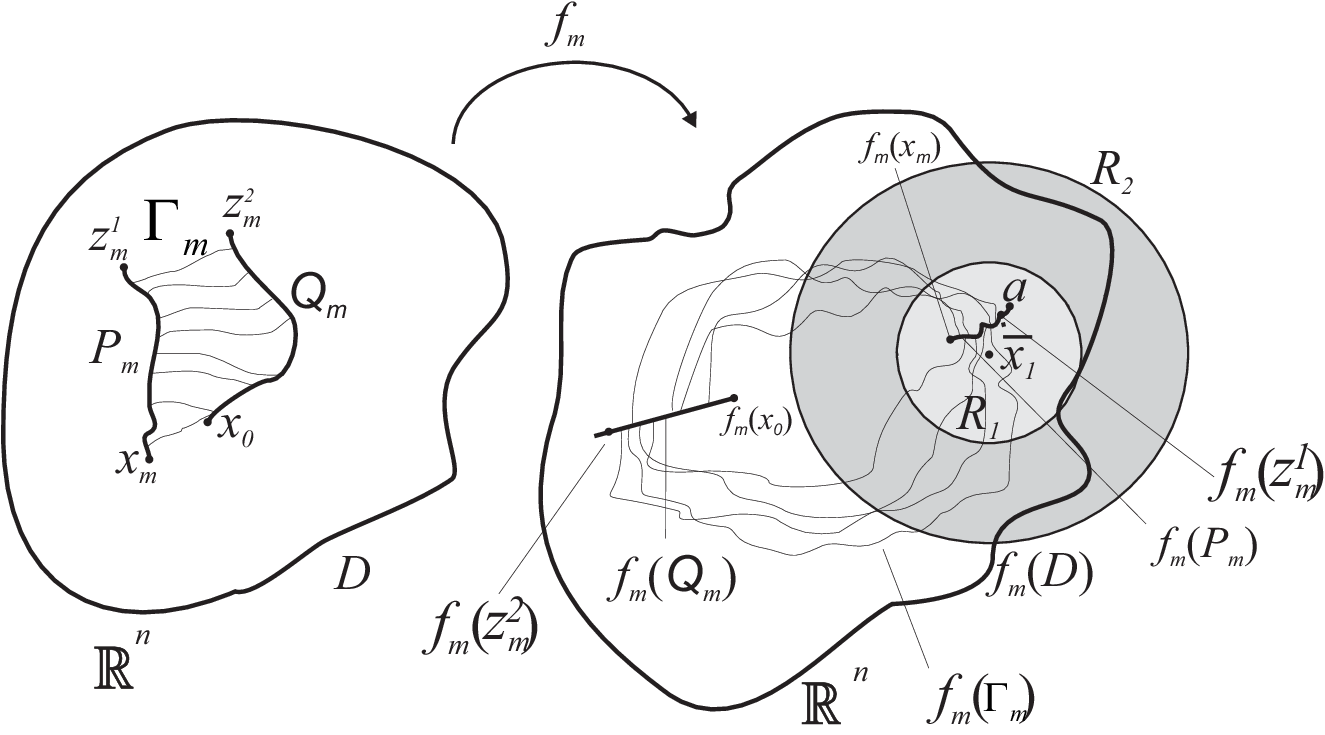}} \caption{To
the proof of Theorem~\ref{th1}}\label{figure6}
 \end{figure}
In addition, we may assume that $0<R_1<r_0(y_0),$ where $r_0(y_0)$
is a number from the formulation of Theorem~\ref{th1}. Let
$a=y_{m_0}\in B(\overline{x_1}, R_1),$ $a\ne y_0,$ be a point,
$m_0\in {\Bbb N},$ such that, if $x\in f^{\,-1}_{m}(a),$
$m=1,2,\ldots ,$ then
\begin{equation}\label{eq6}
|x-x_0|\geqslant C_{m_0}\,, m=1,2,\ldots \,.\end{equation}
The existing of the number $m_0,$ the point $y_{m_0}$ and the
constant $C_{m_0}$ is ensured by the assumptions made in
Theorem~\ref{th1}.

\medskip
The further proof almost completely repeats the scheme presented in
the proof of Theorem~\ref{th2}. Let
$r_m=r_m(t)=(f_m(x_0)-\overline{x_1})t+\overline{x_1},$
$-\infty<t<\infty,$ be a straight line joining the points
$\overline{x_1}$ and $f_m(x_0).$ Join the points $f_m(x_m)$ and $a$
by a path $\alpha_m\colon [0, 1] \rightarrow B(\overline {x_1},
R_1).$
Set $A_1:=\overline{B(\overline{x_1}, R_1)}.$
Arguing similarly to the proof of the relation~(\ref{eq3}), we may
prove that, for some $R_2>R_1>0$
\begin{equation}\label{eq3B}
A_1\subset B(\overline{x_1}, R_2)\,,\qquad r_m(t)\in {\Bbb
R}^n\setminus B(\overline{x_1}, R_2)\qquad \forall\,\, t\in
[1,\infty)\,,\qquad m\geqslant m_1>m_0\,.
\end{equation}
Let $\alpha^0_m:[0, c_1)\rightarrow D$ and $r^0_m:[1,
c_2)\rightarrow D$ be maximal $f_m$-liftings of paths $\alpha_m$ and
$r_m$ starting at points $x_m$ and $x_0,$ respectively. Such maximal
liftings exist by Proposition~\ref{pr3}. By the same Proposition,
$r^0_m(t^{\,\prime}_k)\rightarrow
\partial D$ for some sequence $t^{\,\prime}_k\rightarrow c_2-0.$ Besides that,
if $f^{\,-1}_{m}(a)=\varnothing,$ $m=1,2,\ldots ,$ then
$\alpha^0_m(t_k)\rightarrow
\partial D$ as $k\rightarrow\infty$ (see Proposition~\ref{pr3}).
Otherwise, if $f^{\,-1}_{m}(a)\ne\varnothing$ for some $m=1,2,\ldots
,$ then (by Proposition~\ref{pr3}) there are two cases: 1) either
$\alpha^0_m(t_k)\rightarrow
\partial D$ for some sequence $t_k\rightarrow c_1-0$ or 2) $\alpha^0_m(t)\rightarrow
x_1$ as $t\rightarrow c_1=1,$ $x_1\in D.$ In the second case,
$x_1\in \{f^{\,-1}_{m}(a)\},$ but by~(\ref{eq6}),
$|x_1-x_0|\geqslant C_{m_0}.$ Now, by the triangle inequality,
\begin{gather*}
|\alpha^0_m(0)-\alpha^0_m(1)|=|x_m-x_1|\geqslant\\
\geqslant|x_1-x_0|-|x_0-x_m|\geqslant |x_1-x_0|/2\geqslant C_{m_0}/2
\end{gather*}
for sufficiently large $m\in {\Bbb N}.$ The latter implies that
$d(\alpha^0_m)\geqslant \min \{C_{m_0}/2, d(x_0, \partial D)\},$
$m\geqslant m_2,$ $m_2>m_1.$ Then there are sequences of points
$z^1_m\in |\alpha_m^0|$ and $z^2_m\in |r_m^0|$ and a number
$\delta_0>0$ such that $d(z^1_m, x_0)>\delta_0$ and $d(z^2_m,
x_0)>\delta_0$ for sufficiently large $m\in {\Bbb N}.$
Let $P_m$ be the part of the locus of the path $\alpha^0_m$ in $D,$
located between the points $x_m$ and $z^1_m,$ and $Q_m$ the part of
the locus of the path $r^0_m$ in $D,$ located between the points
$x_0$ and $z^2_m.$ By the construction, $f_m(P_m)\subset A_1$ and
$f_m(Q_m)\subset |r_m(t)|_{[1, \infty)}|.$ Put
$\Gamma_m:=\Gamma(P_m, Q_m, D).$ Then, by Lemma~\ref{lem2C}
\begin{equation}\label{eq8}
M(\Gamma_m)=M(\Gamma(P_m, Q_m, D))\rightarrow\infty\,,\quad
m\rightarrow\infty\,.
\end{equation}
On the other hand,  we set $\widetilde{Q}(y)=\max\{Q(y), 1\}$ and
$$\widetilde{q}_{a}(r)=\frac{1}{\omega_{n-1}r^{n-1}}\int\limits_{S(a,
r)}\widetilde{Q}(y)\,d\mathcal{A}\,,$$
where $\omega_{n-1}$ denotes the area of the unit sphere in ${\Bbb
R}^n,$ and $d\mathcal{A}=d\mathcal{H}^{n-1}$ is the area element on
$S(a, r).$ Also we set
$$I_a=I_a(a, R_1, R_2)=\int\limits_{R_1}^{R_2}\
\frac{dr}{r\widetilde{q}_{a}^{\frac{1}{n-1}}(r)}\,.$$
Then, by~(\ref{eq3B}) and~\cite[Theorem~1.I.5.46]{Ku}, we obtain
that
\begin{equation}\label{eq5F}
\Gamma_m>\Gamma_{f_m}(a, R_1, R_2)\,.
 \end{equation}
Arguing similarly to the case~1) of the proof of Theorem~\ref{th3},
using~(\ref{eq5F}) we obtain that
\begin{equation}\label{eq7G}
M(\Gamma_m)\leqslant M(\Gamma_{f_m}(a, R_1, R_2))\leqslant
\frac{\omega_{n-1}}{I_a^{n-1}}:=C_0\,, \quad m=1,2,\ldots\,.
\end{equation}
The relations~(\ref{eq8}) and~(\ref{eq7G}) contradict each other.
The contradiction obtained above finishes the proof.~$\Box$

\medskip
{\it Proof of Corollary~\ref{cor2}} is similar to the proof of
Corollary~\ref{cor3} and follows by Theorem~\ref{th1}.~$\Box$

\medskip\medskip\medskip
{\bf Funding.} The work was supported by the National Research
Foundation of Ukraine, Project number 2025.07/0014, Project name:
``Modern problems of Mathematical Analysis and Geometric Function
Theory''.

\medskip
{\bf \noindent Miodrag Mateljevi\'{c}} \\ University of Belgrade,
Faculty
of Mathematics \\
16 Studentski trg, P.O. Box 550\\
11001 Belgrade, SERBIA\\
miodrag@matf.bg.ac.rs

\medskip
\medskip
{\bf \noindent Evgeny Sevost'yanov} \\
{\bf 1.} Zhytomyr Ivan Franko State University,  \\
40 Velyka Berdychivska Str., 10 008  Zhytomyr, UKRAINE \\
{\bf 2.} Institute of Applied Mathematics and Mechanics\\
of NAS of Ukraine, \\
19 Henerala Batyuka Str., 84 116 Slov'yansk,  UKRAINE\\
esevostyanov2009@gmail.com

\end{document}